\documentclass[12pt,nointlimits,oneside]{elsarticle}
\usepackage{amsmath,amssymb,amsfonts}
\usepackage{amsthm}


\numberwithin{equation}{section} \theoremstyle{plain}
\newtheorem{theorem}{Theorem}[section]

\newtheorem{lemma}[theorem]{Lemma}
\newtheorem{proposition}[theorem]{Proposition}
\theoremstyle{definition}
\newtheorem{definition}[theorem]{Definition}
\theoremstyle{remark}
\newtheorem{remark}[theorem]{Remark}

\newcommand{\set}[1]{\left\{#1\right\}}
\newcommand{\norm}[1]{\left\lVert#1\right\rVert}
\newcommand{\spr}[1]{#1}
\newcommand{\ind}[1]{1\hspace{-.28em}\mathrm{I}_{#1}}
\newcommand{\abs}[1]{\left\vert#1\right\vert}
\newcommand{\ex}[1]{\mathsf{E}\left[\,#1\,\right]}
\newcommand{\exxi}[1]{\mathsf{E}_\xi\left[\,#1\,\right]}

\DeclareMathOperator{\sign}{sign}
\newcommand{\R}{{\mathbb R}}
\newcommand{\F}{\mathcal F}

\newcommand{\la}{\lambda}
\DeclareMathOperator{\re}{Re}

\newcommand{\wH}{\hat{H}}
\newcommand{\cH}{\check{H}}
\newcommand{\sas}{S$\alpha$S}
\DeclareMathOperator{\spann}{span}



\begin{document}

\title{Real harmonizable multifractional stable process and its local properties\tnoteref{t1}}

\author{Marco Dozzi}
\ead{marco.dozzi@iecn.u-nancy.fr}
\address{Institut Elie Cartan, Universit\'e Henri Poincar\'e Nancy 1,
B.P. 239, F-54506 Vandoeuvre-l\`es-Nancy Cedex}%

\author{Georgiy Shevchenko\corref{cor1}}
\ead{zhora@univ.kiev.ua}
\cortext[cor1]{Corresponding author}

\address{Kiev National Taras Shevchenko University, Department of Mechanics and Mathematics, Volodymyrska str. 64, 01601, Kiev, Ukraine}%

\tnotetext[t1]{This work has been partially supported by the Commission of the European Committees Grant PIRSES-GA-2008-230804 within the program ``Marie Curie Actions''.}
\begin{abstract}
A real harmonizable multifractional stable process is defined, its H\"older continuity and localizability are proved.
The existence of local time is shown and its regularity is established.
\end{abstract}
\begin{keyword}
Stable process\sep harmonizable process\sep multifractionality\sep localizability\sep local time\sep local non-determinism \MSC[2010]{Primary: 60G52, 60G17. Secondary: 60G22, 60G18}%
\end{keyword}%
\maketitle

\section*{Introduction}
Fractional processes are one of the main tools for  modeling the phenomena of long-range dependence in natural sciences, financial
mathematics, telecommunication networks etc. Due to the role played by Gaussian distribution, the most popular and
the most intensively investigated fractional process is the fractional Brownian motion $B^H$, a centered Gaussian process with
the covariance function $\ex{B^H(t) B^H(s)} = \frac12 (t^{2H}+s^{2H}-\abs{t-s}^{2H})$. The parameter $H\in (0,1)$ is called the Hurst
parameter and measures the smoothness of trajectories of the process (it is approximately the H\"older exponent of the process) and
the ``depth of memory'' of the process (for $H>1/2$ the process exhibits the property of long-range dependence).

From the point of view of possible applications, there are two main drawbacks of fractional Brownian motion. The first one comes from the Gaussian
distribution, which has extremely light tails, though many data coming from applications are heavy-tailed. The second one is the homogeneity
of increments that does not allow to model processes having different regularity and different time dependence properties at different
time instances. A related problem is a self-similarity property, which briefly means that the properties of the process are the same
under each scale. However, the absence of such property is apparent in many cases and mostly evident in stock price processes: long-term
data is much smoother than wild intraday quotes.

The light tails problem is worked around usually by considering fractional stable processes. In contrast to the Gaussian case, where
the covariance structure determines the whole distribution of a process, so there is essentially one fractional process, in stable case
there are many of them: linear fractional stable process, harmonizable fractional stable process, Liouville stable process etc. (See
book \cite{SamorTaqqu} for an extensive review of different fractional processes.)

In turn, the homogeneity problem is solved by considering multifractional processes. Recently, several multifractional extensions
of fractional Brownian motion were defined, based on different representations of the fractional Brownian motion: moving average
(linear) multifractional Brownian motion \cite{PeltierLevyVehel}, Volterra multifractional Brownian motion \cite{ralchenko},
harmonizable multifractional Brownian motion \cite{BenassiJaffardRoux97}.

In this paper, we consider a process called \emph{real harmonizable multifractional stable process} which has both properties
of heavy tails and multifractionality, which can be regarded
both as a multifractional generalization of a harmonizable fractional stable process and as a stable generalization of
harmonizable multifractional Brownian motion, and can be used to improve models involving either kind of processes.

Our main interest in this paper is in path properties of this process: continuity, existence and joint continuity of local times.
For fractional harmonizable stable process continuity was proved in \cite{kono-maejima} and local times properties were considered in
\cite{Xiao}. For related results with multifractional harmonizable L\'evy processes   we refer to \cite{lacaux04}.

The paper is organized as follows. In Section 1 we give necessary pre-requisites on stable distributions and local times. Section 2 focuses
on path properties of the process considered: almost sure continuity and localizability. Section 3 is devoted to existence and properties
of local times.

\section{Pre-requisites}\label{sec:pre-req}
\subsection{Stable random variables and processes}\label{subsec:stable}
In this paper we focus only on symmetric $\alpha$-stable (\sas) random variables with $\alpha\in(1,2)$. We recall that a random variable $\xi$
is called \sas\ with a scale parameter $\sigma^\alpha$ if it has a characteristic function
$$
\ex{e^{i\lambda \xi}}= e^{-\abs{\sigma\lambda}^\alpha}.
$$

An important tool to construct stable random variables is \emph{independently scattered rotationally invariant complex \sas\ random measure} with the Lebesgue control measure, which is a complex-valued $\sigma$-additive random measure $M=M_\alpha$ on $\R$ defined by the following properties.
\begin{enumerate}
\item (\emph{Rotationally invariant complex \sas}) for any Borel set $A\subset \R$ and any $\theta\in \R$ the distribution of $e^{i\theta}M(A)$ is the
same as of $M(A)$, and $\mathrm{Re}\, M(A)$ is \sas\ with the scale parameter $\lambda(A)$.
\item (\emph{Independently scattered}) for any disjoint Borel sets $A_1,\dots,A_n\subset[0,\infty)$ the values $M(A_1),\dots, M(A_n)$ are
 independent.
\item For any Borel set $A\subset \R$ \ $M(-A) = \overline{M(A)}$.
\end{enumerate}

For a function $f:\R\to \mathbb C$ such that
\begin{equation}
\label{kindofsymmetry}
f(-x)=\overline{f(x)}\text{ for all }x\in \R
\end{equation}
 and
$$\norm{f}^\alpha_{L^\alpha(\R)}=\int_\R \abs{f(x)}^{\alpha}dx<\infty$$  it is possible to define a stochastic
integral
$$
\int_R f(x) M(dx),
$$
which appears to be a real \sas\ random variable with the scale parameter $\norm{f}^\alpha_{L^\alpha(\R)}$.

In other words, stochastic integral gives an isometry between the space of \sas\ real random variables spanned by the measure $M$
with the norm $$\norm{\xi}_\alpha^\alpha = c(\xi)=-\log \ex{e^{i\xi}}$$
and the subspace of $L^\alpha(\R)$ consisting of functions satisfying \eqref{kindofsymmetry}, i.e. having
adjoint values at symmetric points.

We end this subsection with the so-called LePage representation of processes given as transformations of \sas\ random measure.
For details see \cite{marcus-pisier, kono-maejima}.

Assume we have a measurable function $f\colon\R_+\times \R\to \mathbb C$ such that for each $t\ge 0$ the function $f(t,\cdot)$ satisfies
\eqref{kindofsymmetry} and belongs to $L^\alpha(\R)$. Define a process $\set{X(t),t\ge 0}$ by
\begin{equation}
\label{sasprocess}
X(t) = \int_\R f(t,x) M(dx).
\end{equation}

The next proposition is a slight modification of \cite{KM}, the proof is exactly the same as there with a slight adjustment for
the property $M(-dx)=\overline{M(dx)}$ in our case, so we skip it.
\begin{theorem}
Let $\varphi$ be an arbitrary probability density on $\R$ equivalent to the Lebesgue measure. Also let $\set{\Gamma_k,k\ge 1}$,
$\set{\xi_k,k\ge 1}$, $\set{g_k,k\ge 1}$ be three independent sets of random variables, such that
\begin{itemize}
\item $\set{\Gamma_k,k\ge 1}$ is a sequence of arrivals of Poisson process with unit intensity;
\item $\set{\xi_k,k\ge 1}$ is a sequence of independent random variables with density $\varphi$;
\item $\set{g_k,k\ge 1}$ are independent rotationally invariant complex Gaussian with $\ex{\abs{\mathrm{Re}\, g_k}^\alpha}=1$.
\end{itemize}

Then the process $\set{X(t),t\ge 0}$ defined by \eqref{sasprocess} has the same finite-dimensional distributions as the process
\begin{equation}\label{lepage}
X'(t) = C_\alpha \mathrm{Re}\,\sum_{k=1}^\infty \Gamma_k^{-1/\alpha} \varphi(\xi_k)^{-1/\alpha} f(t,\xi_k) g_k,
\end{equation}
where $C_\alpha = \left(\int_0^\infty x^{-\alpha} \sin x\, dx\right)^{1/\alpha}$, and this series converges almost surely
for each $t$.
\end{theorem}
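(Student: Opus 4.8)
The plan is to compare characteristic functions. By the Cram\'er--Wold device it suffices to show that for every $n\in\mathbb N$, all $t_1,\dots,t_n\ge 0$ and all $\theta_1,\dots,\theta_n\in\R$ the variables $\sum_{j=1}^n\theta_j X(t_j)$ and $\sum_{j=1}^n\theta_j X'(t_j)$ have the same law. Set $h:=\sum_{j=1}^n\theta_j f(t_j,\cdot)$; since \eqref{kindofsymmetry} and membership in $L^\alpha(\R)$ are preserved by finite (real) linear combinations, $h$ has both. Then $\sum_j\theta_j X(t_j)=\int_\R h(x)\,M(dx)$ is \sas\ with scale $\norm{h}^\alpha_{L^\alpha(\R)}$, whereas, because $\sum_j\theta_j f(t_j,\xi_k)=h(\xi_k)$, one has $\sum_j\theta_j X'(t_j)=C_\alpha\,\mathrm{Re}\,\sum_{k\ge1}\Gamma_k^{-1/\alpha}\varphi(\xi_k)^{-1/\alpha}h(\xi_k)g_k=:Y$. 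So everything reduces to the one-dimensional statement: for such $h$ the series defining $Y$ converges a.s.\ and $\ex{e^{i\lambda Y}}=\exp(-\abs\lambda^\alpha\norm{h}^\alpha_{L^\alpha(\R)})$, $\lambda\in\R$.

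For the almost sure convergence, write $N:=\sum_{k\ge1}\delta_{(\Gamma_k,\xi_k)}$, a Poisson random measure on $(0,\infty)\times\R$ with intensity $dr\,\varphi(x)\,dx$, independent of $\set{g_k}$, and put $z_k:=\Gamma_k^{-1/\alpha}\varphi(\xi_k)^{-1/\alpha}h(\xi_k)$. Conditionally on $N$ the summands $\mathrm{Re}(z_kg_k)$ are independent centered real Gaussians with conditional variances $v_\alpha\abs{z_k}^2$, where $v_\alpha:=\mathrm{Var}(\mathrm{Re}\,g_1)$, because by rotational invariance $\mathrm{Re}(z g_1)$ has the law of $\abs z\,\mathrm{Re}(g_1)$. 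The conditional variance $C_\alpha^2 v_\alpha\sum_k\abs{z_k}^2$ of $Y$ is a.s.\ finite: $\Gamma_k/k\to1$ a.s., and since $\ex{\bigl(\varphi(\xi_1)^{-2/\alpha}\abs{h(\xi_1)}^2\bigr)^{\alpha/2}}=\int_\R\abs{h(x)}^\alpha\,dx<\infty$, a three-series argument yields $\sum_k k^{-2/\alpha}\varphi(\xi_k)^{-2/\alpha}\abs{h(\xi_k)}^2<\infty$ a.s.\ (the exponent $\alpha/2$, i.e.\ the $L^\alpha$-norm of $h$, is essential here: the naive second-moment bound on $\abs{z_k}$ is infinite). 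Hence the conditional Gaussian series converges a.s.\ and $Y\mid N\sim\mathcal N\bigl(0,\,C_\alpha^2 v_\alpha\sum_k\abs{z_k}^2\bigr)$. This ``convergence'' part is the standard one for LePage-type series and may alternatively be quoted from \cite{marcus-pisier,KM}.

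It remains to evaluate $\ex{e^{i\lambda Y}}$. Conditioning on $N$ and then taking expectations,
$$
\ex{e^{i\lambda Y}}=\Ex{\exp\Bigl(-\tfrac12\lambda^2 C_\alpha^2 v_\alpha\sum_{k\ge1}\Gamma_k^{-2/\alpha}\varphi(\xi_k)^{-2/\alpha}\abs{h(\xi_k)}^2\Bigr)},
$$
and the exponential formula for the Poisson random measure $N$ turns the right-hand side into
$$
\exp\Bigl(-\int_0^\infty\!\!\int_\R\Bigl(1-\exp\bigl(-\tfrac12\lambda^2 C_\alpha^2 v_\alpha\,r^{-2/\alpha}\varphi(x)^{-2/\alpha}\abs{h(x)}^2\bigr)\Bigr)\varphi(x)\,dx\,dr\Bigr).
$$
The inner integral is found by the substitution $s=c(x)\,r^{-2/\alpha}$ and one integration by parts, giving $\int_0^\infty\bigl(1-e^{-c\,r^{-2/\alpha}}\bigr)\,dr=\Gamma(1-\tfrac{\alpha}{2})\,c^{\alpha/2}$; as $c(x)^{\alpha/2}$ carries the factor $\varphi(x)^{-1}$, the density $\varphi(x)$ cancels and the exponent collapses to $-\kappa_\alpha\abs\lambda^\alpha\int_\R\abs{h(x)}^\alpha\,dx$ with $\kappa_\alpha=2^{-\alpha/2}C_\alpha^\alpha v_\alpha^{\alpha/2}\Gamma(1-\tfrac{\alpha}{2})$. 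Finally one checks $\kappa_\alpha=1$ from the reflection and duplication formulas for $\Gamma$ together with $\int_0^\infty x^{-\alpha}\sin x\,dx=\tfrac{\pi}{2\Gamma(\alpha)\sin(\pi\alpha/2)}$ and the normalisation $\ex{\abs{\mathrm{Re}\,g_1}^\alpha}=1$ (which pins down $v_\alpha$): this is exactly what the constant $C_\alpha$ is designed for. Then $\ex{e^{i\lambda Y}}=\exp(-\abs\lambda^\alpha\norm{h}^\alpha_{L^\alpha(\R)})$, the characteristic function of $\int_\R h\,dM$, and we are done.

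The genuinely delicate step is the a.s.\ convergence in the second paragraph; the de-conditioning via the exponential formula and the subsequent $\Gamma$-function bookkeeping are routine. The only change with respect to \cite{KM} is that the relation $M(-dx)=\overline{M(dx)}$ and the symmetry \eqref{kindofsymmetry} (so that $\abs{h(-x)}=\abs{h(x)}$) must be threaded through consistently -- the ``slight adjustment'' mentioned above.
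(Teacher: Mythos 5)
Your strategy is sound, and it is in fact the same LePage-series argument that the paper silently outsources to its reference: reduce by Cram\'er--Wold to a single $h\in L^\alpha(\R)$, view $\sum_k\delta_{(\Gamma_k,\xi_k)}$ as a Poisson random measure with intensity $dr\,\varphi(x)\,dx$, use rotational invariance of $g_k$ to see that $Y$ is conditionally a centered Gaussian series, verify a.s.\ finiteness of the conditional variance (your truncation/Fubini bound with the exponent $\alpha/2$ is exactly the right one, since $2/\alpha>1$), and then de-condition via the Laplace functional; the substitution giving $\int_0^\infty(1-e^{-cr^{-2/\alpha}})\,dr=\Gamma(1-\tfrac\alpha2)c^{\alpha/2}$ is correct. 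The Hermitian symmetry \eqref{kindofsymmetry} indeed only enters to make $\int_\R h\,dM$ real with scale $\norm{h}^\alpha_{L^\alpha(\R)}$, which is consistent with the paper's remark that only a ``slight adjustment'' for $M(-dx)=\overline{M(dx)}$ is needed.

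The gap is the last step, which you assert instead of verifying: the claim $\kappa_\alpha=1$. Carry out your own bookkeeping. The normalisation $\ex{\abs{\mathrm{Re}\,g_1}^\alpha}=1$ gives $v_\alpha^{\alpha/2}=\sqrt{\pi}\,2^{-\alpha/2}/\Gamma\bigl(\tfrac{\alpha+1}{2}\bigr)$, hence $\kappa_\alpha=2^{-\alpha/2}C_\alpha^\alpha v_\alpha^{\alpha/2}\Gamma\bigl(1-\tfrac\alpha2\bigr)=C_\alpha^\alpha\,\sqrt{\pi}\,\Gamma\bigl(1-\tfrac\alpha2\bigr)\big/\bigl(2^\alpha\Gamma\bigl(\tfrac{\alpha+1}{2}\bigr)\bigr)$, and by the duplication and reflection formulas this second factor equals $\pi/\bigl(2\Gamma(\alpha)\sin(\pi\alpha/2)\bigr)=\int_0^\infty x^{-\alpha}\sin x\,dx$. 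So what your argument actually yields is $\ex{e^{i\lambda Y}}=\exp\bigl(-\kappa_\alpha\abs{\lambda}^\alpha\norm{h}^\alpha_{L^\alpha(\R)}\bigr)$ with $\kappa_\alpha=C_\alpha^\alpha\int_0^\infty x^{-\alpha}\sin x\,dx$, which equals $1$ if and only if $C_\alpha=\bigl(\int_0^\infty x^{-\alpha}\sin x\,dx\bigr)^{-1/\alpha}$. With the constant as printed in the statement (exponent $+1/\alpha$) your computation gives scale $\bigl(\int_0^\infty x^{-\alpha}\sin x\,dx\bigr)^2\norm{h}^\alpha_{L^\alpha(\R)}$, not $\norm{h}^\alpha_{L^\alpha(\R)}$. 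In other words, the constant check is not ``routine bookkeeping that is exactly what $C_\alpha$ is designed for'': it is the step that pins the constant down, and when done it shows the printed $C_\alpha$ should be the reciprocal (the standard LePage/Samorodnitsky--Taqqu normalisation), so the exponent sign in the statement is presumably a typo. You should either complete the computation and state the constant your proof actually produces, or explicitly flag the discrepancy; asserting $\kappa_\alpha=1$ for the constant as written is false.
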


\subsection{Local times}

Let $X=(X(t),t\ge0)$ be a real-valued separable random process with Borel sample
functions. For any Borel set $B\subset\mathbb{R}_{+}$ the occupation measure of $X$ on $B$ is defined by
\begin{equation*}
\mu _{B}(A)=\lambda (\{s\in B,\text{ }X(s)\in A\})\text{ \ \ \ for all
Borel sets }A\text{ in }%
\mathbb{R},
\end{equation*}
where $\lambda $ is the Lebesgue measure on $\mathbb{R}_{+}.$ If $\mu _{B}$ is absolutely continuous with respect to the Lebesgue
measure on $\mathbb{R}$, we say that $X$ has a local time on $B$ and define its local time, $%
L(B,\cdot ),$ to be the Radon-Nikodym derivative of $\mu _{B}.$ We write $%
L(t,x)$ instead of $L([0,t],x)$ and interpret it as the time spent by $X$ in
$x$ during the time period $[0,t].$

By standard monotone class arguments we deduce that the local times have a
measurable modification that satisfies the following \emph{occupation
density formula : }for any Borel set $B\subset\mathbb{R}_{+}$ and any measurable function $f:\mathbb{R}
\rightarrow\mathbb{R}$
\begin{equation*}
\int_{B}f(X(t))dt=\int_{\mathbb{R}}f(x)L(B,x)dx.
\end{equation*}

By applying this formula to $f(x)=e^{iux},$ and writing $\hat{L}(B,u)$ for the Fourier transform of $L(B,x)$, we get $\hat{L}(B,u)=\int_{B}e^{iuX(s)}ds$ and by the Fourier inversion formula $%
L(B,x)=\frac{1}{2\pi }\int \int_{B}e^{iu(X(s)-x)}dsdu,$ if this integral
exists.

As a consequence, the following expressions for the moments of local time
hold : for any $x,y\in\mathbb{R}$, $t,h\in\mathbb{R}_{+}$ and $m\geqq 2$%
\begin{gather*}
\ex{\big(L(t+h,x)-L(t,x)\big)^{m}}
\\
=\frac{1}{(2\pi )^{m}}\int_{[t,t+h]^{m}}\int_{\mathbb{R}^{m}}\exp (-ix\sum_{j=1}^{m}u_{j})\ex{\exp
\set{i\sum_{j=1}^{m}u_{j}X(s_{j})}}\prod\limits_{j=1}^{m}du_{j}\prod%
\limits_{j=1}^{m}ds_{j},
\end{gather*}

and for every even $m\geqq 2$%
\begin{eqnarray*}
&&\ex{\big(L(t+h,y)-L(t,y)-L(t+h,x)+L(t,x)\big)^{m}} \\
&=&\frac{1}{(2\pi )^{m}}\int_{[t,t+h]^{m}}\int_{\mathbb{R}^{m}}\prod\limits_{j=1}^{m}[e^{-iyu_{j}}-e^{-ixu_{j}}]\ex{\exp
\set{i\sum_{j=1}^{m}u_{j}X(s_{j})}}\prod\limits_{j=1}^{m}du_{j}\prod%
\limits_{j=1}^{m}ds_{j}.
\end{eqnarray*}

Suitable upper bounds for these moments imply the existence of a (jointly)
continuous version of local time and, as a consequence, a certain degree
of irregularity of the sample paths of the process itself. In order to prove the
joint continuity of the local time of Gaussian processes, S.M.~Berman (see e.g.
\cite{Ber70}, \cite{Ber73}) has introduced the notion of \emph{local nondeterminism }%
(LND). This notion has been extended to stable processes by J.P.~Nolan \cite{nolan},
where the equivalent notion of \emph{locally approximately
independent increments} was introduced. These notions will be recalled in section 3.
Since then the local time of stable processes has been studied by several authors; we refer to the
recent survey by Y. Xiao \cite{Xiao2} for more recent results. The local time of multifractional moving average stable processes has been studied in \cite{dozzietal}.

\section{Definition and pathwise properties of real harmonizable stable process}

Let $M$ be an independently scattered rotationally invariant complex \sas\ measure on $\R$ defined in Subsection~\ref{subsec:stable}.

Throughout the paper we will denote by $C$ any constant, which does not depend on any variables, unless otherwise is stated. Of course, $C$ may change from line to line.

Recall that a real harmonizable fractional stable process with Hurst parameter $H$ is defined as
\begin{equation}\label{zH}
Z^H(t) = \int_{\R} \frac{e^{i\spr{tx}}-1}{\abs{x}^{1/\alpha + H}} M(dx).
\end{equation}
A multifractional generalization of this definition consists, naturally, in letting the Hurst parameter depend on $t$.
\begin{definition}
A \emph{real harmonizable multifractional stable process} (rhmsp) with Hurst function $H(t)$ and a stability parameter $\alpha$
is defined as
\begin{equation}\label{X}
X(t) =  \int_{\R} \frac{e^{i\spr{tx}}-1}{\abs{x}^{1/\alpha + H(t)}} M(dx).
\end{equation}
\end{definition}
Clearly, $X(t) = Z^{H(t)}(t)$.
We assume that $0<\wH=\inf_t{H(t)}\le H(t) \le \sup_t {H(t)} = \cH<1$.

\subsection{Norm estimates for the increments}
\begin{lemma}\label{lemma1}
For all $H_1,H_2 \in (\wH,\cH)$ it holds
\begin{equation*}
\norm{Z^{H_1}(t) - Z^{H_2}(t)}_\alpha \le C\abs{H_1 - H_2},\quad t\in [0,T].
\end{equation*}
\end{lemma}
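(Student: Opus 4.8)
The plan is to reduce everything to a single deterministic $L^\alpha$-integral via the isometry recalled in Subsection~\ref{subsec:stable}. Since by \eqref{zH}
$$
Z^{H_1}(t) - Z^{H_2}(t) = \int_{\R} \big(e^{itx}-1\big)\Big(\abs{x}^{-1/\alpha-H_1}-\abs{x}^{-1/\alpha-H_2}\Big)M(dx),
$$
the isometry gives
$$
\norm{Z^{H_1}(t)-Z^{H_2}(t)}_\alpha^\alpha = \int_{\R}\abs{e^{itx}-1}^\alpha\,\abs{x}^{-1}\,\Big|\,\abs{x}^{-H_1}-\abs{x}^{-H_2}\,\Big|^\alpha\,dx,
$$
so it suffices to bound the right-hand side by $C^\alpha\abs{H_1-H_2}^\alpha$ with $C$ depending only on $\alpha,\wH,\cH,T$.

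The next step is a pointwise estimate of the difference of powers. Writing $\abs{x}^{-H_i}=e^{-H_i\log\abs{x}}$ and using $\abs{e^a-e^b}\le\abs{a-b}\max(e^a,e^b)$ yields
$$
\Big|\,\abs{x}^{-H_1}-\abs{x}^{-H_2}\,\Big|\le\abs{H_1-H_2}\,\abs{\log\abs{x}}\,\max\big(\abs{x}^{-H_1},\abs{x}^{-H_2}\big).
$$
Here the point is that for $\abs{x}\le1$ the maximum equals $\abs{x}^{-\max(H_1,H_2)}\le\abs{x}^{-\cH}$, while for $\abs{x}\ge1$ it equals $\abs{x}^{-\min(H_1,H_2)}\le\abs{x}^{-\wH}$. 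Combined with the elementary bound $\abs{e^{itx}-1}\le\min(2,T\abs{x})$ valid for $t\in[0,T]$, this gives
$$
\norm{Z^{H_1}(t)-Z^{H_2}(t)}_\alpha^\alpha\le\abs{H_1-H_2}^\alpha\left(T^\alpha\int_{\abs{x}\le1}\abs{x}^{\alpha(1-\cH)-1}\abs{\log\abs{x}}^\alpha dx+2^\alpha\int_{\abs{x}\ge1}\abs{x}^{-1-\wH\alpha}\abs{\log\abs{x}}^\alpha dx\right),
$$
and it remains only to check that the two integrals converge.

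Finiteness of the first integral holds because $\alpha(1-\cH)-1>-1$ (as $\cH<1$), so the integrand is integrable near the origin, the logarithmic factor being harmless; finiteness of the second holds because $-1-\wH\alpha<-1$ (as $\wH>0$), so the integrand is integrable at infinity. Taking $\alpha$-th roots then gives the lemma, with the bound uniform in $t\in[0,T]$. I expect the only real subtlety to be keeping the final constant independent of $H_1$ and $H_2$: this is precisely why one splits the integral at $\abs{x}=1$ and dominates $\max(H_1,H_2)$ by $\cH$ on $\{\abs{x}\le1\}$ and $\min(H_1,H_2)$ by $\wH$ on $\{\abs{x}\ge1\}$, since no single power of $\abs{x}$ is integrable over all of $\R$.
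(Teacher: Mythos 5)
Your proof is correct and follows essentially the same route as the paper: the $L^\alpha$ isometry, the pointwise bound $\big||x|^{-H_1}-|x|^{-H_2}\big|\le\abs{H_1-H_2}\,\abs{\log\abs{x}}\,(|x|^{-H_1}\vee|x|^{-H_2})$ together with $\abs{e^{itx}-1}\le C(1\wedge\abs{x})$, and a split of the integral at $\abs{x}=1$. In fact your placement of the exponents ($\cH$ on $\{\abs{x}\le1\}$, $\wH$ on $\{\abs{x}\ge1\}$) is the correct domination, whereas the paper's displayed integrals have these two roles interchanged — a harmless slip there, since both resulting integrals converge under $0<\wH\le\cH<1$.
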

\begin{proof}
Write
\begin{gather*}
\norm{Z^{H_1}(t) - Z^{H_2}(t)}_\alpha^\alpha = \int_{\R} \abs{e^{i\spr{tx}} - 1}^\alpha |x|^{-1} \big||x|^{-H_1} - |x|^{-H_2}\big|^\alpha dx \\
\le C \int_{\R}  (1\wedge |x|)^\alpha|x|^{-1}
|\log|x||^\alpha (|x|^{-\alpha H_1}\vee |x|^{-\alpha H_2})\abs{H_1-H_2}^\alpha dx\\
= C\abs{H_1-H_2}^\alpha \left(\int_{\abs{x}<1} |x|^{\alpha(1- \wH) -1}|\log|x||^\alpha dx + \int_{\abs{x}>1} |x|^{-1 - \alpha\cH} |\log|x||^\alpha dx\right)\\
\le C\abs{H_1 - H_2}^\alpha,
\end{gather*}
whence we have the assertion.
\end{proof}

We assume  that $H$ is H\"older continuous
with order greater than $\cH$, i.e., there exists $\gamma > \cH$ s.t. for all $t,s\ge 0$
\begin{equation*}
\abs{H(t) - H(s)} \le C |t-s|^\gamma.
\end{equation*}

\begin{lemma}\label{lemma2}
There exist positive constants $C_1,C_2>0$ such that for any $H\in [\wH,\cH]$ one has
\begin{equation*}
C_1 |t-s|^{H}\le \norm{Z^H(t) - Z^H(s)}_\alpha \le C_2 |t-s|^{H}
\end{equation*}
locally uniformly in $s,t$.
\end{lemma}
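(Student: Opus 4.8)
The plan is to compute the $\alpha$-norm of the increment directly from the harmonizable representation and extract the scaling $|t-s|^H$. By definition,
\begin{equation*}
\norm{Z^H(t) - Z^H(s)}_\alpha^\alpha = \int_{\R} \abs{e^{itx}-e^{isx}}^\alpha \abs{x}^{-1-\alpha H}\,dx = \int_{\R} \abs{e^{i(t-s)x}-1}^\alpha \abs{x}^{-1-\alpha H}\,dx,
\end{equation*}
where the second equality uses $\abs{e^{itx}-e^{isx}}=\abs{e^{i(t-s)x}-1}$. Now substitute $y=(t-s)x$ (assuming $t>s$; the case $t<s$ is symmetric). This gives $dx = dy/(t-s)$ and $\abs{x}^{-1-\alpha H} = (t-s)^{1+\alpha H}\abs{y}^{-1-\alpha H}$, so the integral becomes $(t-s)^{\alpha H}\int_\R \abs{e^{iy}-1}^\alpha \abs{y}^{-1-\alpha H}\,dy$. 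Thus $\norm{Z^H(t)-Z^H(s)}_\alpha^\alpha = \abs{t-s}^{\alpha H}\, K(H)$ with $K(H) = \int_\R \abs{e^{iy}-1}^\alpha \abs{y}^{-1-\alpha H}\,dy$, and raising to the power $1/\alpha$ yields exactly $\norm{Z^H(t)-Z^H(s)}_\alpha = K(H)^{1/\alpha}\abs{t-s}^H$.

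Then I would check that $K(H)$ is finite and bounded away from $0$ and $\infty$ uniformly for $H\in[\wH,\cH]$. Near $y=0$ one has $\abs{e^{iy}-1}^\alpha \sim \abs{y}^\alpha$, so the integrand behaves like $\abs{y}^{\alpha-1-\alpha H} = \abs{y}^{\alpha(1-H)-1}$, which is integrable since $1-H>0$; the worst case is $H=\cH<1$, still giving a positive exponent. Near $y=\pm\infty$, $\abs{e^{iy}-1}^\alpha \le 2^\alpha$ is bounded, so the integrand is $O(\abs{y}^{-1-\alpha H})$ with $-1-\alpha H < -1$, again integrable; the worst case is $H=\wH>0$. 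Hence $K(H)<\infty$ for every $H\in[\wH,\cH]$, and since $K$ is continuous in $H$ (dominated convergence, using the uniform integrable bounds just described) on the compact interval $[\wH,\cH]$, it attains a positive minimum $C_1^\alpha$ and finite maximum $C_2^\alpha$. Setting $C_1 = (\min_{H\in[\wH,\cH]} K(H))^{1/\alpha}$ and $C_2 = (\max_{H\in[\wH,\cH]} K(H))^{1/\alpha}$ finishes it; note that here the estimate is in fact an exact identity with constants independent of $t,s$, so the ``locally uniformly'' qualifier is automatic (indeed it holds globally in $s,t$).

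There is essentially no obstacle here — the key point, which makes the argument clean, is the self-similarity of the harmonizable kernel $\abs{x}^{-1/\alpha-H}(e^{itx}-1)$ under the scaling $x\mapsto x/(t-s)$, which turns the increment norm into an exact power law. The only thing requiring a small amount of care is confirming that $K(H)$ does not blow up or vanish at the endpoints $H=\wH$ and $H=\cH$, but since $0<\wH\le\cH<1$ both tail and singularity exponents stay strictly on the integrable side, so uniformity over the compact range $[\wH,\cH]$ is immediate. (One could alternatively bound $\abs{e^{iy}-1}\le \min(2,\abs{y})$ and $\abs{e^{iy}-1}\ge c$ on a fixed annulus to get the two-sided bound by hand, avoiding the continuity-of-$K$ argument, but the dominated-convergence route is cleaner.)
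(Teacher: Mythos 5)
Your proof is correct. It reduces, as the paper does, to the deterministic integral $\int_\R \abs{e^{i(t-s)x}-1}^\alpha \abs{x}^{-1-\alpha H}\,dx$, but from there you take a slightly different route: the exact change of variables $y=(t-s)x$ turns the norm into the identity $\norm{Z^H(t)-Z^H(s)}_\alpha = K(H)^{1/\alpha}\abs{t-s}^H$ with $K(H)=\int_\R\abs{e^{iy}-1}^\alpha\abs{y}^{-1-\alpha H}dy$, and uniformity in $H$ is then obtained from continuity of $K$ on the compact interval $[\wH,\cH]$ (your dominated-convergence bound $\min(2,\abs{y})^\alpha(\abs{y}^{-1-\alpha\wH}\vee\abs{y}^{-1-\alpha\cH})$ indeed justifies this, and positivity of $K$ is immediate since the integrand vanishes only on a null set). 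The paper instead never performs the substitution: for the upper bound it uses $\abs{e^{iy}-1}\le C(1\wedge\abs{y})$ and splits the integral at $\abs{x}=1/\abs{t-s}$, and for the lower bound it restricts to $\abs{x}<c_2/\abs{t-s}$ where $\abs{e^{iy}-1}\ge c_1\abs{y}$; this yields constants of the form $C/(\alpha(1-H))$ and $C/(\alpha H)$ that are visibly uniform over $[\wH,\cH]$ without any continuity-in-$H$ argument. Your version buys an exact power law (and shows the bound is global in $s,t$, not just local), at the cost of the extra compactness/continuity step; the paper's version is more hands-on and gives the uniform constants directly, which is the same style of estimate it reuses in Lemma~\ref{lemma1} and later sections.
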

\begin{proof}
Write
\begin{gather*}
\norm{Z^{H}(t) - Z^{H}(s)}_\alpha^\alpha = \int_{\R} \abs{e^{i\spr{tx}} - e^{i sx}}^\alpha |x|^{-1-\alpha H}  dx
\le C \int_{\R}  (1\wedge \abs{t-s}|x|)^\alpha|x|^{-1-\alpha H} dx\\
= C \left(\abs{t-s}^\alpha\int_{\abs{x}<1/|t-s|} |x|^{\alpha (1-H)-1} dx + \int_{\abs{x}>1/|t-s|} |x|^{-1 - \alpha H}  dx\right)\\
\le C\big(\abs{t-s}^{\alpha -\alpha(1-H)} + \abs{t-s}^{\alpha H}\big)=C\abs{t-s}^{\alpha H}.
\end{gather*}
To prove the lower bound, observe that there exist positive constants $c_1,c_2$ such that $\abs{e^{iy} - 1}>c_1\abs{y}$ for $\abs{y}<c_2$ and write
\begin{gather*}
\norm{Z^{H}(t) - Z^{H}(s)}_\alpha^\alpha \ge \int_{\abs{x}<c_2/\abs{t-s}} \abs{e^{i(t-s)x} - 1}^\alpha |x|^{-1-\alpha H}  dx\\
\ge C \abs{t-s}^\alpha \int_{\abs{x}<c_2/\abs{t-s}}  |x|^{\alpha (1-H)-1} dx=C\abs{t-s}^{\alpha H}.
\end{gather*}

\end{proof}

Lemmata \ref{lemma1} and \ref{lemma2} imply the following
\begin{proposition}
There exist $\delta,C_1,C_2 >0$ s.t. for rhmsp $X$ given by \eqref{X} and $|t-s|<\delta$ it holds
\begin{equation}\label{Xcont}
C_1 \abs{t-s}^{\wH(t,s)}\le \norm{X(t) - X(s)}_\alpha \le C_2 |t-s|^{\cH(t,s)},\quad t,s\in[0,T]
\end{equation}
where $\wH(t,s) = \min_{[t,s]} H(u)$, $\cH(t,s) = \max_{[t,s]} H(u)$.
\end{proposition}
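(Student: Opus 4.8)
The plan is to combine the two lemmata via the decomposition $X(t) - X(s) = \bigl(Z^{H(t)}(t) - Z^{H(s)}(t)\bigr) + \bigl(Z^{H(s)}(t) - Z^{H(s)}(s)\bigr)$. By the triangle inequality for the quasinorm $\norm{\cdot}_\alpha$ (recall $\alpha > 1$, so this is an actual norm), the upper bound follows: Lemma~\ref{lemma1} controls the first term by $C\abs{H(t)-H(s)}\le C\abs{t-s}^\gamma$, while Lemma~\ref{lemma2}, applied with the fixed exponent $H = H(s)$, controls the second term by $C_2\abs{t-s}^{H(s)}$. Since $\gamma > \cH \ge H(s) \ge \cH(t,s)$ is false in general — one must be slightly careful here — the point is that $\gamma > \cH \ge \cH(t,s)$, so for $\abs{t-s} < 1$ the term $\abs{t-s}^\gamma$ is dominated by $\abs{t-s}^{\cH(t,s)}$; hence $\norm{X(t)-X(s)}_\alpha \le C\abs{t-s}^{\cH(t,s)}$ for $\abs{t-s}$ small, absorbing constants into $C_2$.

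For the lower bound I would use the reverse triangle inequality: $\norm{X(t)-X(s)}_\alpha \ge \norm{Z^{H(s)}(t) - Z^{H(s)}(s)}_\alpha - \norm{Z^{H(t)}(t) - Z^{H(s)}(t)}_\alpha \ge C_1\abs{t-s}^{H(s)} - C\abs{t-s}^\gamma$. Now I want to replace $H(s)$ by $\wH(t,s)$ in the main term: since $H(s)\ge \wH(t,s)$ and $\abs{t-s}<1$, we have $\abs{t-s}^{H(s)} \le \abs{t-s}^{\wH(t,s)}$, which goes the wrong way. The fix is to note that $\wH(t,s) \le H(s)$ and, more usefully, that on a short interval Hölder continuity gives $H(s) \le \wH(t,s) + C\abs{t-s}^\gamma$, so $\abs{t-s}^{H(s)} = \abs{t-s}^{\wH(t,s)}\abs{t-s}^{H(s)-\wH(t,s)} \ge \abs{t-s}^{\wH(t,s)} \cdot \abs{t-s}^{C\abs{t-s}^\gamma}$, and the last factor tends to $1$ as $\abs{t-s}\to 0$, hence is bounded below by a positive constant (say $\ge 1/2$) for $\abs{t-s}<\delta$. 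Thus $\norm{X(t)-X(s)}_\alpha \ge \tfrac12 C_1 \abs{t-s}^{\wH(t,s)} - C\abs{t-s}^\gamma$.

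It then remains to absorb the error term $C\abs{t-s}^\gamma$ into the main term $\tfrac12 C_1\abs{t-s}^{\wH(t,s)}$. Since $\wH(t,s)\le\cH<\gamma$, for $\abs{t-s}$ small enough we have $C\abs{t-s}^\gamma \le C\abs{t-s}^{\wH(t,s)}\abs{t-s}^{\gamma-\cH} \le \tfrac14 C_1\abs{t-s}^{\wH(t,s)}$, the last step holding once $\abs{t-s}^{\gamma-\cH} \le C_1/(4C)$. Choosing $\delta$ small enough that all of the above reductions are simultaneously valid, we obtain $\norm{X(t)-X(s)}_\alpha \ge \tfrac14 C_1\abs{t-s}^{\wH(t,s)}$ for $\abs{t-s}<\delta$, which is \eqref{Xcont} after relabelling constants.

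The main obstacle — really the only subtle point — is the bookkeeping with the variable exponents: one must consistently exploit that $\abs{t-s}<1$ makes $r\mapsto \abs{t-s}^r$ decreasing, that $\gamma$ strictly exceeds $\cH$ (hence exceeds every local exponent $\cH(t,s)$ and $\wH(t,s)$), and that the gap between $H(s)$ and its local min/max over $[s,t]$ is itself $O(\abs{t-s}^\gamma)$ and therefore produces only a multiplicative factor converging to $1$. Everything else is a direct consequence of Lemmata~\ref{lemma1} and~\ref{lemma2} together with the (reverse) triangle inequality, and the locally uniform nature of the constants in those lemmata carries over since the Hölder constant of $H$ is global.
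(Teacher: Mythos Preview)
Your approach is essentially correct but there is a small gap in the upper bound that mirrors the issue you carefully fixed in the lower bound and then forgot to address here. From the decomposition you get
\[
\norm{X(t)-X(s)}_\alpha \le C\abs{t-s}^\gamma + C_2\abs{t-s}^{H(s)},
\]
and you then claim this is $\le C\abs{t-s}^{\cH(t,s)}$. The $\abs{t-s}^\gamma$ term is fine since $\gamma>\cH(t,s)$, but for the second term you need $\abs{t-s}^{H(s)}\le C\abs{t-s}^{\cH(t,s)}$, and since $H(s)\le\cH(t,s)$ (the max over $[s,t]$ dominates the value at $s$) this inequality goes the wrong way for $\abs{t-s}<1$. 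The fix is exactly the trick you used below: $\cH(t,s)-H(s)\le C\abs{t-s}^\gamma$, so $\abs{t-s}^{H(s)}=\abs{t-s}^{\cH(t,s)}\abs{t-s}^{-(\cH(t,s)-H(s))}\le \abs{t-s}^{\cH(t,s)}\cdot\abs{t-s}^{-C\abs{t-s}^\gamma}$, and the last factor is bounded for small $\abs{t-s}$. With this correction your argument is complete.

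The paper's proof avoids this bookkeeping entirely by choosing the intermediate exponent to be the extremal value itself: for the upper bound it writes
\[
\norm{X(t)-X(s)}_\alpha\le \norm{Z^{H(t)}(t)-Z^{\cH(t,s)}(t)}_\alpha+\norm{Z^{H(s)}(s)-Z^{\cH(t,s)}(s)}_\alpha+\norm{Z^{\cH(t,s)}(t)-Z^{\cH(t,s)}(s)}_\alpha,
\]
so that Lemma~\ref{lemma2} produces $\abs{t-s}^{\cH(t,s)}$ directly, and similarly uses $\wH(t,s)$ for the lower bound. This costs one extra application of Lemma~\ref{lemma1} but eliminates the need for the $\abs{t-s}^{O(\abs{t-s}^\gamma)}\to 1$ step. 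Your route is equally valid once patched; the paper's is just a bit cleaner.
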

\begin{proof}
Let $\cH(t,s) = H(\check t)$, $\wH(t,s)=H(\hat t)$.
\begin{gather*}
\norm{X(t) - X(s)}_\alpha\\
\le \norm{Z^{H(t)}(t) - Z^{\cH(t,s)}(t)}_\alpha +\norm{Z^{H(s)}(s) - Z^{\cH(t,s)}(s)}_\alpha + \norm{Z^{\cH(t,s)}(t) - Z^{\cH(t,s)}(s)}_\alpha\\
\le \abs{H(t)-H(\check t)}+ \abs{H(s)-H(\check t)} + C|t-s|^{\cH(t,s)}\le C|t-s|^\gamma + C|t-s|^{\cH(t,s)}
\end{gather*}
Since $\cH(t,s)<\gamma$, we get the upper bound.

The lower one is proved similarly:
\begin{gather*}
C\abs{t-s}^{\wH(t,s)}\le \norm{Z^{\wH(t,s)}(t) - Z^{\wH(t,s)}(s)}_\alpha\\\le \norm{X(t) - X(s)}_\alpha +\norm{Z^{H(t)}(t) - Z^{\wH(t,s)}(t)}_\alpha +\norm{Z^{H(s)}(s) - Z^{\wH(t,s)}(s)}_\alpha \\
\le \norm{X(t) - X(s)}_\alpha + \abs{H(t)-H(\hat t)}+ \abs{H(s)-H(\hat t)}\\\le \norm{X(t) - X(s)}_\alpha+C|t-s|^\gamma.
\end{gather*}
\end{proof}
\subsection{H\"older continuity of rhmsp}
In this subsection we prove a H\"older continuity of rhmsp. Our argument is a slight modification of the one found in \cite{kono-maejima}
for harmonizable fractional stable motion. A similar argument was also used in \cite{lacaux09} to prove a H\"older regularity
of operator scaling stable random fields.
\begin{theorem}
The rhmsp $X$ has a version, which is almost surely H\"older continuous of any order $\kappa<\wH$ and
moreover almost surely satisfies
$$
\sup_{\substack{t,s\in[0,T]\\|t-s|<\delta}} \abs{X(t)-X(s)} = o(\delta^{\wH} \abs{\log\delta}^{1/\alpha+1/2+\varepsilon}),\quad \delta\to 0+,
$$
for all $T,\varepsilon>0$.
\end{theorem}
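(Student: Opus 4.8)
The plan is to use the LePage representation \eqref{lepage} together with a chaining argument over the series, in the spirit of \cite{kono-maejima}. First I would fix $T>0$ and work on $[0,T]$. By the LePage theorem, it suffices to bound
$$
S(\delta) := \sup_{\substack{t,s\in[0,T]\\ |t-s|<\delta}} \abs{X'(t)-X'(s)}
= C_\alpha \sup \Big| \re \sum_{k=1}^\infty \Gamma_k^{-1/\alpha}\varphi(\xi_k)^{-1/\alpha}\big(f(t,\xi_k)-f(s,\xi_k)\big) g_k\Big|,
$$
where $f(t,x) = (e^{itx}-1)|x|^{-1/\alpha-H(t)}$. The idea is to split the sum at a level $N=N(\delta)$ to be chosen later: the tail $k>N$ is controlled crudely using the decay of $\Gamma_k^{-1/\alpha}$ and a pointwise bound on $|f(t,x)-f(s,x)|$, while the head $k\le N$ is a finite Gaussian sum to which a metric-entropy / Dudley-type bound for Gaussian processes applies, using the increment estimate \eqref{Xcont} (equivalently Lemmata~\ref{lemma1} and~\ref{lemma2}) to control the canonical metric.

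For the pointwise increment bound I would combine the two mechanisms already visible in the proofs of Lemmata~\ref{lemma1} and~\ref{lemma2}: writing
$$
f(t,x)-f(s,x) = \big(e^{itx}-e^{isx}\big)|x|^{-1/\alpha-H(t)} + \big(e^{isx}-1\big)\big(|x|^{-1/\alpha-H(t)}-|x|^{-1/\alpha-H(s)}\big),
$$
the first term is $O\big((1\wedge|t-s||x|)|x|^{-1/\alpha-H(t)}\big)$ and the second is $O\big((1\wedge|x|)|x|^{-1/\alpha}|\log|x||\,|t-s|^\gamma (|x|^{-\wH}\vee|x|^{-\cH})\big)$ by the Hölder assumption on $H$; so on the relevant range of $\xi_k$ (which has density $\varphi$, say a fixed density with controlled decay, so that $\varphi(\xi_k)^{-1/\alpha}$ contributes only polynomially) one gets $\abs{f(t,\xi_k)-f(s,\xi_k)}\le C|t-s|^{\wH}\,\psi(\xi_k)$ for an integrable weight $\psi$, uniformly in $t,s\in[0,T]$. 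This handles the tail: $\sum_{k>N}\Gamma_k^{-1/\alpha}\varphi(\xi_k)^{-1/\alpha}|f(t,\xi_k)-f(s,\xi_k)||g_k|$ is $\le C|t-s|^{\wH}\sum_{k>N}\Gamma_k^{-1/\alpha}\varphi(\xi_k)^{-1/\alpha}\psi(\xi_k)|g_k|$, and since $\Gamma_k\sim k$ and $\alpha>1$ the tail sum is a.s. finite and, with the right choice of $N=N(\delta)$, of the required order (one wants $N(\delta)$ a negative power of $\delta$).

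For the head $k\le N$, conditionally on $\{\Gamma_k,\xi_k\}$ the process $t\mapsto \re\sum_{k\le N}\Gamma_k^{-1/\alpha}\varphi(\xi_k)^{-1/\alpha}f(t,\xi_k)g_k$ is Gaussian with canonical metric dominated (using the same pointwise bound, now without losing the extra Gaussian factor) by $C|t-s|^{\wH}$ times a random but $N$-dependent constant; a standard Dudley/Fernique bound for the supremum of a Gaussian process over $[0,T]$ with such a modulus yields, after integrating out, a term of order $\delta^{\wH}|\log\delta|^{1/2+\varepsilon'}$ times a slowly growing function of $N$. Optimizing the split, i.e. choosing $N=N(\delta)$ so that the tail contributes $\delta^{\wH}|\log\delta|^{1/\alpha}$-type terms and the head the $|\log\delta|^{1/2}$-type terms, and summing over a dyadic sequence $\delta=2^{-n}$ with a Borel--Cantelli argument to pass from ``in probability'' to ``almost surely'', gives the bound $S(\delta)=o(\delta^{\wH}|\log\delta|^{1/\alpha+1/2+\varepsilon})$; Hölder continuity of any order $\kappa<\wH$ is then immediate. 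The main obstacle I anticipate is the bookkeeping in the tail estimate: getting a clean $|t-s|^{\wH}$ factor \emph{uniformly} in $t,s$ requires care because the exponent $1/\alpha+H(t)$ itself varies with $t$, so one must handle the ranges $|x|<1$ and $|x|>1$ separately and use $\wH\le H(t)\le\cH$ to produce a single integrable dominating weight $\psi$ independent of $t$, exactly as in Lemma~\ref{lemma1}; and one must make sure the moments of $\sum_k \Gamma_k^{-1/\alpha}\varphi(\xi_k)^{-1/\alpha}\psi(\xi_k)|g_k|$ are finite so that the Borel--Cantelli step goes through, which is where the restriction $\alpha>1$ is used.
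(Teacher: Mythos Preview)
Your head/tail decomposition has a fatal gap in the tail estimate. You claim that
\[
\sum_{k>N}\Gamma_k^{-1/\alpha}\,\varphi(\xi_k)^{-1/\alpha}\psi(\xi_k)\,|g_k|
\]
is almost surely finite ``since $\Gamma_k\sim k$ and $\alpha>1$''. But $\alpha>1$ means $1/\alpha<1$, so $\sum_k k^{-1/\alpha}$ \emph{diverges}; and since the factors $\varphi(\xi_k)^{-1/\alpha}\psi(\xi_k)|g_k|$ are i.i.d.\ nonnegative and not identically zero, the absolute series diverges a.s. The LePage series \eqref{lepage} converges only thanks to the cancellations coming from the centred complex Gaussians $g_k$; it does not converge absolutely, and any argument that replaces $g_k$ by $|g_k|$ on the whole tail cannot work. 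Consequently your split at level $N(\delta)$ collapses, and with it the Borel--Cantelli step and the moment bound you flag at the end --- you have the role of the restriction $\alpha>1$ exactly backwards.

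The paper's proof avoids this entirely by exploiting the conditional Gaussianity \emph{without} any head/tail split. One conditions on the full sequences $(\Gamma_k),(\xi_k)$ and bounds the conditional variance
\[
\ex{(X(t)-X(s))^2\mid\Gamma,\xi}=C_\alpha^2\sum_{k\ge1}\Gamma_k^{-2/\alpha}\,\varphi(\xi_k)^{-2/\alpha}\,\abs{f(t,\xi_k)-f(s,\xi_k)}^2.
\]
Here the exponent is $2/\alpha>1$, so $S(\Gamma)=\sum_k\Gamma_k^{-2/\alpha}<\infty$ a.s.; this is precisely what the squaring buys and what your linear bound loses. One then takes the expectation in $\xi$ only, with the specific choice $\varphi(x)=K_\eta\abs{x}^{-1}\abs{\log\abs{x}}^{-1-\eta}$ so that $\varphi^{1-2/\alpha}$ contributes a pure logarithmic factor; after a dyadic summability argument this gives an almost sure bound $a(u)=o\big(u^{2\wH}\abs{\log u}^{2(1+\eta)/\alpha}\big)$ on the conditional modulus, and a standard Gaussian chaining lemma (Lemma~1 of \cite{kono-maejima}) then supplies the extra $\abs{\log\delta}^{1/2}$. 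The tailored density $\varphi$ is not incidental: it is what converts the exponent $2/\alpha$ in the variance into the $1/\alpha$ in the final logarithmic rate.
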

\begin{proof}
Let $T>0$ be fixed and throughout this proof $t,s\in[0,T]$.

We use the LePage representation \eqref{lepage}. To simplify the notation we
write this representation for the process $X$ itself rather than for its version:
$$
X(t) = C_\alpha \re \sum_{k\ge 1} \Gamma_k^{-1/\alpha}\varphi(\xi_k)^{-1/\alpha}f(t,\xi_k) g_k,
$$
where $f(t,x) = (e^{itx}-1)\abs{x}^{-1/\alpha-H(t)}$, $\varphi(x) = K_\eta\abs{x}^{-1}\abs{\log\abs{x}}^{-1-\eta}$,
$\eta>0$ is arbitrary but fixed, $K_\eta$ is a normalizing constant.

Conditioning on $\Gamma$ and $\xi$, $X$ has the Gaussian distribution, so
\begin{gather*}
\ex{(X(t)-X(s))^2\mid \Gamma,\xi} = C_\alpha^2\sum_{k\ge 1} \Gamma_k^{-2/\alpha} \varphi(\xi_k)^{-2/\alpha}\abs{f(t,\xi_k)-f(s,\xi_k)}^2
\le C a(u),
\end{gather*}
where
\begin{gather*}
a(u) = \sum_{k\ge 1} \Gamma_k^{-2/\alpha} \varphi(\xi_k)^{-2/\alpha}\sup_{\abs{t-s}<u}\abs{f(t,\xi_k)-f(s,\xi_k)}^2.
\end{gather*}
Write
\begin{gather*}
\sup_{\abs{t-s}<u}\abs{f(t,\xi_k)-f(s,\xi_k)}\\
\le \sup_{\abs{t-s}<u}\abs{(e^{itx}-e^{isx}}\abs{x}^{-1/\alpha-H(t)} +
\sup_{\abs{t-s}<u}\abs{e^{isx}-1}\abs{x}^{-1/\alpha}\abs{x^{-H(t)}-x^{-H(s)}}\\
\le C (u\abs x \wedge 1)\abs{x}^{-1/\alpha}(\abs{x}^{-\wH}\vee \abs{x}^{-\cH})
\\+ C (\abs x \wedge 1)\abs{x}^{-1/\alpha}(\abs{x}^{-\wH}\vee \abs{x}^{-\cH})\abs{\log \abs{x}} \sup_{\abs{t-s}<u}\abs{H(t)-H(s)}\\
\le C \abs{x}^{-1/\alpha}(\abs{x}^{-\wH}\vee \abs{x}^{-\cH})\big((u\abs x \wedge 1)+ (\abs x \wedge 1)\abs{\log \abs{x}} u^\gamma \big).
\end{gather*}
Keeping this estimate in mind, take now the expectation $\exxi{a(u)}$ with respect to the variables $\xi$ only:
\begin{gather*}
\exxi{a(u)}\le C S(\Gamma) (I_1+I_2),
\end{gather*}
where
\begin{gather*}
I_1=\int_\R \abs{x}^{-2/\alpha}(\abs{x}^{-2\wH}\vee \abs{x}^{-2\cH})
(u\abs x \wedge 1)^2 \varphi^{1-2/\alpha}(x)dx\\
= 2\int_0^\infty x^{-1}(x^{-2\wH}\vee x^{-2\cH})
(u x \wedge 1)^2 \abs{\log x}^{(1+\eta)(2/\alpha-1)}dx\\
\le C u^{2\wH} \int_0^\infty z^{-1}(z^{-2\wH}\vee z^{-2\cH})
(\abs z \wedge 1)^2  \abs{\log(z/u)}^{(1+\eta)(2/\alpha-1)} dz\\
\le C u^{2\wH} \abs{\log u}^{(1+\eta)(2/\alpha-1)},
\\
I_2= u^{2\gamma}\int_\R \abs{x}^{-2/\alpha}(\abs{x}^{-2\wH}\vee \abs{x}^{-2\cH})
 (\abs x \wedge 1)^2\abs{\log \abs{x}}^2 \varphi^{1-2/\alpha}(x)dx \\
 = 2 u^{2\gamma}\int_0^\infty x^{-1}(x^{-2\wH}\vee x^{-2\cH})
(x \wedge 1)^2 \abs{\log x}^{(1+\eta)(2/\alpha-1)+2}dx \le C u^{2\gamma},
\\
S(\Gamma) = \sum_{k\ge 1} \Gamma^{-2/\alpha}_k<\infty\quad\text{a.a. $\Gamma$},
\end{gather*}
where the last is true owing to the fact that $\Gamma_j/j\to 1$, $j\to\infty$, almost surely
by the strong law of large numbers, and $2/\alpha>1$.
Therefore
\begin{gather*}
\exxi{a(u)}\le C(\Gamma) u^{2\wH} \abs{\log u}^{(1+\eta)(2/\alpha-1)}
\end{gather*}
almost surely.

Define $b(u)=u^{2\wH} \abs{\log u}^{2(1+\eta)/\alpha}$. We have
$$
\exxi{\sum_{n\ge 1} \frac{a(2^{-n})}{b(2^{-n})}}\le C(\Gamma) \sum_{n\ge 1} n^{-1-\eta},
$$
so for almost all $\xi,\Gamma$ we have ${a(2^{-n})}/{b(2^{-n})}\to 0$, $n\to\infty$.
It is easy to see that $b(2t)\le C b(t)$, and $a(u)$ is increasing, so from the last convergence we get
$a(u)/b(u)\to 0$, $u\to 0+$, or $a(u) = o_{\xi,\Gamma}(u^{2\wH} \abs{\log u}^{2(1+\eta)/\alpha})$.
So we have
$$
\ex{(X(t)-X(s))^2\mid \Gamma,\xi} = o_{\xi,\Gamma}(u^{2\wH} \abs{\log u}^{2(1+\eta)/\alpha}),\quad u\to 0+.
$$
Now recall once more that $X$ is Gaussian given $\xi$ and $\Gamma$, so by Lemma 1 of \cite{kono-maejima}
$$
\sup_{\abs{t-s}<\delta}\abs{X(t)-X(s)} = o_{\omega}(\delta^{\wH} \abs{\log \delta}^{1/\alpha + \eta/\alpha + 1/2}),\quad \delta\to 0+,
$$
whence we get the statement of the theorem.
\end{proof}

\subsection{Localizability of rhmsp}
We start this section by giving Falconer's notion of localizability.
\begin{definition}
Process $X$ is called $H$-\emph{localizable} at a point $t$ with the local version $Y$ if
\begin{equation}\label{localizability}
\set{\frac{1}{\delta^H}(X(t+\delta u)-X(t)),u\ge 0}\overset{fdd}{\longrightarrow} \set{Y(u),u\ge 0},\ \delta\to 0+.
\end{equation}
(Here $\overset{fdd}{\longrightarrow}$ stands for the convergence of finite-dimensional distributions.)

It is called strongly $H$-localisable at a point $t$ if in \eqref{localizability} the convergence is in the sense of the distribution on the path space.
\end{definition}
Some authors use the term \emph{local asymptotic self-similarity} for localizability, which reflects the fact that the local version $Y$ is an $H$-self-similar process.
\begin{theorem}
The rhmsp $X$  is localizable at any point $t$ with local version being real harmonizable fractional stable process with Hurst parameter $H(t)$.
\end{theorem}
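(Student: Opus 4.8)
The plan is to establish the convergence of finite-dimensional distributions in \eqref{localizability} directly at the level of characteristic functions, using the fact that for \sas\ random vectors obtained as stochastic integrals against $M$ the joint characteristic function is completely determined by the $L^\alpha$-norm of the appropriate linear combination of integrands. Fix $t$ and write $H:=H(t)$; set $Y^{\delta}(u) = \delta^{-H}(X(t+\delta u)-X(t))$. Since
$$
Y^{\delta}(u) = \int_{\R} \delta^{-H}\Big(\big(e^{i(t+\delta u)x}-1\big)\abs{x}^{-1/\alpha - H(t+\delta u)} - \big(e^{itx}-1\big)\abs{x}^{-1/\alpha-H}\Big) M(dx),
$$
a change of variable $x = y/\delta$ turns this into $\int_{\R} f_{\delta}(u,y)\, M(dy)$ with
$$
f_{\delta}(u,y) = \delta^{-H-1/\alpha}\Big( e^{ity/\delta}\big(e^{iuy}-1\big)\abs{y/\delta}^{-1/\alpha-H(t+\delta u)} + \big(e^{ity/\delta}-1\big)\big(\abs{y/\delta}^{-1/\alpha-H(t+\delta u)} - \abs{y/\delta}^{-1/\alpha-H}\big)\Big),
$$
where I have used $\abs{M(\delta^{-1}dy)} \overset{d}{=} \delta^{-1/\alpha}\abs{M(dy)}$ in the scaling of the control measure (the self-similarity of the \sas\ random measure). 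The pointwise limit as $\delta\to0+$ of the first term is $e^{ity/\delta}(e^{iuy}-1)\abs{y}^{-1/\alpha-H}\cdot\delta^{H(t+\delta u)-H}$, and since $H$ is H\"older of order $\gamma>\cH\ge H$ we have $\delta^{H(t+\delta u)-H} = \exp\big(O(\delta^{\gamma}\abs{\log\delta})\big)\to 1$; the second term carries an extra factor $\abs{y/\delta}^{-H(t+\delta u)} - \abs{y/\delta}^{-H} = O(\delta^{\gamma}\abs{\log(y/\delta)}\abs{y/\delta}^{-\cH})$ which vanishes after integration. Thus $f_{\delta}(u,\cdot) \to f_{0}(u,\cdot)$ where $f_{0}(u,y) = (e^{iuy}-1)\abs{y}^{-1/\alpha-H}$ is exactly the integrand of $Z^{H}$, giving formally the stated local version.

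The substantive work is to promote this pointwise convergence of integrands to convergence of the relevant $L^\alpha$-norms, hence of joint characteristic functions. Concretely, for any $m$, reals $u_1,\dots,u_m\ge0$ and $\theta_1,\dots,\theta_m\in\R$, I must show
$$
\Big\|\sum_{j=1}^{m}\theta_j f_{\delta}(u_j,\cdot)\Big\|_{L^\alpha(\R)}^\alpha \longrightarrow \Big\|\sum_{j=1}^{m}\theta_j f_{0}(u_j,\cdot)\Big\|_{L^\alpha(\R)}^\alpha,\quad \delta\to0+,
$$
after which $\ex{\exp\set{i\sum_j\theta_j Y^{\delta}(u_j)}} = \exp\big(-\|\sum_j\theta_j f_\delta(u_j,\cdot)\|_\alpha^\alpha\big) \to \ex{\exp\set{i\sum_j\theta_j Z^{H}(u_j)}}$, which is \eqref{localizability}. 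I would obtain this by dominated convergence: the pointwise convergence of $f_\delta(u,y)$ to $f_0(u,y)$ for a.e.\ $y$ is clear from the above, and the $\alpha$-th powers are dominated uniformly in $\delta$ (for $\delta$ small) by an integrable function. The domination is built exactly as in the proof of Lemma~\ref{lemma2} together with Lemma~\ref{lemma1}: $\abs{f_\delta(u,y)}^\alpha \le C\big(1\wedge\abs{uy}\big)^\alpha\abs{y}^{-1}(\abs{y}^{-\alpha\wH}\vee\abs{y}^{-\alpha\cH}) + C\abs{y}^{-1}(1\wedge\abs{y/\delta}^\alpha)(\abs{y/\delta}^{-\alpha\wH}\vee\abs{y/\delta}^{-\alpha\cH})\delta^{\alpha\gamma}\abs{\log\abs{y/\delta}}^\alpha\delta^{-\alpha H-1}\cdot\delta$, wait—more carefully, after the change of variable the second contribution is $C\abs{y}^{-1}(1\wedge\abs{y/\delta}^\alpha)\abs{\log\abs{y/\delta}}^\alpha(\abs{y/\delta}^{-\alpha\wH}\vee\abs{y/\delta}^{-\alpha\cH})\delta^{\alpha\gamma-\alpha H}$, and since $\gamma>\cH\ge H$ the power $\delta^{\alpha(\gamma-H)}\to0$, so this term not only is dominated but disappears in the limit. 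The first contribution is bounded by $C(1\wedge\abs{y})^\alpha\abs{y}^{-1}(\abs{y}^{-\alpha\wH}\vee\abs{y}^{-\alpha\cH})$ up to a factor $\delta^{\alpha(H(t+\delta u)-H)}$ which is bounded (above and below) for small $\delta$; this dominating function is integrable by the same splitting $\int_{\abs{y}<1}\abs{y}^{\alpha(1-\wH)-1}dy + \int_{\abs{y}>1}\abs{y}^{-1-\alpha\cH}dy<\infty$ already used in Lemma~\ref{lemma2}.

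The main obstacle is the uniform-in-$\delta$ domination near $y=0$ and near $y=\infty$ in the presence of the oscillating factor $e^{ity/\delta}$, whose modulus is $1$ so it is harmless for the absolute bound, but which must be handled with care when verifying that the cross terms in $\big|\sum_j\theta_j f_\delta(u_j,y)\big|^\alpha$ converge pointwise (they do, since each $f_\delta(u_j,y)\to f_0(u_j,y)$ pointwise and $z\mapsto\abs{z}^\alpha$ is continuous); the factor $e^{ity/\delta}$ is common to the first term of every $f_\delta(u_j,\cdot)$ and can be pulled out, while the second term tends to $0$, so the limit of the sum is $e^{ity/\delta}\sum_j\theta_j(e^{iu_jy}-1)\abs{y}^{-1/\alpha-H}$ — and $\abs{e^{ity/\delta}}=1$ again removes the phase. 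A secondary technical point is checking that $f_\delta(u,\cdot)$ genuinely satisfies the symmetry \eqref{kindofsymmetry} so that the stochastic integral is defined and real; this is immediate since $f_\delta$ is obtained from $f$, which does, by a real change of variable. Once the $L^\alpha$-convergence is in hand the theorem follows; I do not expect to need the stronger (path-space) form of localizability here, and indeed the statement only claims the fdd version.
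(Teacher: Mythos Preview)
Your approach is essentially the paper's: compute the joint characteristic function as an $L^\alpha$-norm integral and pass to the limit by dominated convergence. The one structural difference is that you perform the change of variable $y=\delta x$ \emph{before} establishing domination, whereas the paper first bounds the integrand in the original variable $x$ by a $\delta$-free integrable majorant, applies DCT, and only then substitutes $y=\delta x$ to identify the limit (arriving at exactly your $R_\delta\to 0$ computation).

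The paper's order is cleaner for two reasons. First, in the original variable the factor $e^{itx}-1$ (rather than your $e^{ity/\delta}-1$) gives the $\delta$-independent bound $1\wedge|tx|$ near the origin, so the ``second piece'' of your decomposition is majorized directly; in your variable that piece is \emph{not} uniformly dominated near $y=0$ --- your own computation produces the factor $\delta^{\alpha(\gamma-H)}$ only \emph{after} integrating, i.e.\ you get $L^\alpha$-convergence to $0$ rather than a pointwise dominating function. Second, your statement that $f_\delta(u,y)\to f_0(u,y)$ pointwise is false as written, since the phase $e^{ity/\delta}$ has no limit; you correctly notice this later and salvage it by observing that the phase is common to all first terms and drops out of the modulus. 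Neither issue is fatal: once you split $\sum_j\theta_j f_\delta(u_j,\cdot)=A_\delta+B_\delta$, show $\|B_\delta\|_{L^\alpha}\to 0$, and apply DCT to $|A_\delta|^\alpha$ (whose pointwise limit \emph{does} exist after the common phase is removed), the argument closes. But the paper's ordering sidesteps both complications entirely.
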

\begin{proof}
Define
$$
Y_t^\delta(u) = \frac{1}{\delta^{H(t)}}(X(t+\delta u)-X(t)).
$$
We will assume throughout that $\delta<1$.

For $u_1,\dots,u_n>0$, $\la_1,\dots,\la_n\in \R$ denote $s_k=t+\delta u_k$ and write
\begin{gather*}
-\log \ex{\exp\set{i\sum_{k=1}^n\lambda_k Y_t^\delta(u_k)}}\\
= -\log \ex{\exp\set{\frac{i}{\delta^{H(t)}}\int_\R\frac{1}{\abs{x}^{1/\alpha}}
\sum_{k=1}^n\lambda_k \Big[\frac{e^{i s_k x}-1}{\abs{x}^{H(s_k)}}-\frac{e^{itx}-1}{\abs{x}^{H(t)}}\Big]M(dx)}}\\
= \set{\frac{1}{\delta^{\alpha H(t)}}\int_\R\frac{1}{\abs{x}} \abs{\sum_{k=1}^n\lambda_k \Big[\frac{e^{i s_k x}-1}{\abs{x}^{H(s_k)}}-\frac{e^{itx}-1}{\abs{x}^{H(t)}}\Big]}^\alpha dx}.
\end{gather*}
Now estimate the integrand multiplied by $|x|$:
\begin{gather*}
\abs{\sum_{k=1}^n\lambda_k \Big[\frac{e^{i s_k x}-1}{\abs{x}^{H(s_k)}}-\frac{e^{itx}-1}{\abs{x}^{H(t)}}\Big]}^\alpha
\le C \sum_{k=1}^n \abs{\la_k}^\alpha\abs{\frac{e^{i s_k x}-1}{\abs{x}^{H(s_k)}}-\frac{e^{itx}-1}{\abs{x}^{H(t)}}}^\alpha\\
\le C \sum_{k=1}^n \abs{\la_k}^\alpha \left(\abs{e^{i s_k x}-1}^\alpha\abs{\abs{x}^{-H(s_k)}-\abs{x}^{-H(t)}}^\alpha+\abs{x}^{-\alpha H(t)}\abs{e^{i s_k x}-e^{itx}}^\alpha\right)\\
\le C \sum_{k=1}^n \abs{\la_k}^\alpha \left(\abs{\log\abs{x}}^\alpha \abs{x}^{-\alpha \theta}\abs{H(s_k)-H(t)}^{\alpha}(1\wedge\abs{x})^\alpha+\abs{x}^{-\alpha H(t)}\abs{s_k -t}^\alpha (1\wedge\abs{x})^\alpha\right)\\
\le C \sum_{k=1}^n \abs{\la_k}^\alpha (\abs{x}^{-\alpha \cH}\vee \abs{x}^{-\alpha \wH})(1\wedge\abs{x})^\alpha\big[\abs{s_k-t}^{\alpha \gamma} \abs{\log{\abs{x}}}^\alpha +\abs{s_k-t}^\alpha\big]\\
\le C \sum_{k=1}^n \abs{\la_k}^\alpha \delta^{\alpha \gamma} \big(\abs{x}^{\alpha(1-\wH)}\ind{\abs{x}<1}+ \abs{x}^{-\alpha \cH}\ind{\abs{x}>1}\big)(1+\abs{\log{\abs{x}}}^\alpha)\\
\le C\delta^{\alpha \gamma} \big(\abs{x}^{\alpha(1-\wH)}\ind{\abs{x}<1}+ \abs{x}^{-\alpha \cH}\ind{\abs{x}>1}\big)(1+\abs{\log{\abs{x}}}^\alpha).
\end{gather*}
We remark that the constants here depend only on $\alpha$, $t,u_1,\dots,u_n$ and $\la_1,\dots,\la_n$.

Now
\begin{gather*}
\frac{1}{\delta^{\alpha H(t)}}\frac{1}{\abs{x}} \abs{\sum_{k=1}^n\lambda_k \Big[\frac{e^{i s_k x}-1}{\abs{x}^{H(s_k)}}-\frac{e^{itx}-1}{\abs{x}^{H(t)}}\Big]}^\alpha\\
\le C\big(\abs{x}^{\alpha(1-\wH)-1}\ind{\abs{x}<1}+ \abs{x}^{-1-\alpha \cH}\ind{\abs{x}>1}\big)(1+\abs{\log{\abs{x}}}^\alpha),
\end{gather*}
which is integrable over $\R$. Hence by the dominated convergence theorem
\begin{gather*}
-\lim_{\delta\to 0+}\log \ex{\exp\set{i\sum_{k=1}^n\lambda_k Y_t^\delta(u_k)}}\\
= \int_\R\frac{1}{\abs{x}} \lim_{\delta\to 0+}\set{\frac{1}{\delta^{\alpha H(t)}}\abs{\sum_{k=1}^n\lambda_k \Big[\frac{e^{i s_k x}-1}{\abs{x}^{H(s_k)}}-\frac{e^{itx}-1}{\abs{x}^{H(t)}}\Big]}^\alpha dx}\\
= \int_\R \lim_{\delta\to 0+}\frac{1}{\abs{\delta x}} \abs{\sum_{k=1}^n\lambda_k \frac{1}{\delta^{H(t)}}\Big[\frac{e^{i (t+\delta u_k) x}-1}{\abs{x}^{H(t+\delta u_k)}}-\frac{e^{itx}-1}{\abs{x}^{H(t)}}\Big]}^\alpha d(\delta x)\\
= \int_\R\frac{1}{\abs{y}} \abs{\sum_{k=1}^n\lambda_k \lim_{\delta\to 0+}\Big[\frac{e^{i u_k y}-e^{-ity/\delta}}{\abs{y}^{H(t+\delta u_k)}}\delta^{H(t+\delta u_k)-H(t)}-\frac{1-e^{-ity/\delta}}{\abs{y}^{H(t)}}\Big]}^\alpha dy\\
= \int_\R\frac{1}{\abs{y}} \abs{\sum_{k=1}^n\lambda_k \lim_{\delta\to 0+}\Big[\frac{e^{i u_k y}-1}{\abs{y}^{H(t)}}+(e^{itu_k} -e^{-ity/\delta})R_\delta\Big]}^\alpha dy,
\end{gather*}
where
\begin{gather*}
R_\delta = \frac{1}{\abs{y}^{H(t+\delta u_k)}}\delta^{H(t+\delta u_k)-H(t)}-\frac{1}{\abs{y}^{H(t)}} = \frac{1}{|y|^{H(t)}}\Big[\Big(\frac{\delta}{\abs{y}}\Big)^{H(t+\delta u_k)-H(t)}-1\Big].
\end{gather*}
Estimate
\begin{gather*}
\abs{\log\Big(\frac{\delta}{\abs{y}}\Big)^{H(t+\delta u_k)-H(t)}}\le \abs{H(t+\delta u_k)-H(t)}(\abs{\log \delta}+\abs{\log\abs{y}})\\
\le C\delta^\gamma (\abs{\log \delta}+\abs{\log\abs{y}})\to 0, \quad \delta\to 0+.
\end{gather*}
Thus $R_\delta \to 0$, $\delta\to 0+$.

Finally,
\begin{equation}\label{cfconv}
\lim_{\delta\to 0+}\log \ex{\exp\set{i\sum_{k=1}^n\lambda_k Y_t^\delta(u_k)}}
= -\int_\R\frac{1}{\abs{y}} \abs{\sum_{k=1}^n\lambda_k \Big[\frac{e^{i u_k y}-1}{\abs{y}^{H(t)}}\Big]}^\alpha dy,
\end{equation}
which is exactly the logarithm of the characteristic function of $Z^{H(t)}(u_1),\dots$,$Z^{H(t)}(u_n)$, as required.
\end{proof}
\begin{remark}
By using the same kind of argument as the one used in the proof of continuity, it is possible to prove tightness of laws
of processes on the space of continuous paths and whence derive a strong localizability.
\end{remark}

\section{Local times for rhmsp}

\subsection{Properties of the local time}

We start this section by showing the existence and square integrability of a local time.
\begin{proposition}
The rhmsp $X$ has a square integrable local time $L(t,x)$.
\end{proposition}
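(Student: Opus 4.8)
The plan is to verify the classical Fourier-analytic sufficient condition for the existence of a square integrable local time. In the notation of Section~\ref{sec:pre-req} the Fourier transform of the occupation measure of $X$ on $[0,t]$ is $\hat{L}(t,u)=\int_0^t e^{iuX(s)}\,ds$, so by Plancherel's theorem $\int_\R L(t,x)^2\,dx=\frac{1}{2\pi}\int_\R\bigl|\hat{L}(t,u)\bigr|^2\,du$ whenever the right-hand side is finite. Hence it suffices to show that
\[
\ex{\int_\R\bigl|\hat{L}(t,u)\bigr|^2\,du}=\int_\R\int_{[0,t]^2}\ex{e^{iu(X(r)-X(s))}}\,dr\,ds\,du<\infty ,
\]
because this makes $\int_\R|\hat{L}(t,u)|^2\,du$ finite almost surely, which forces $\mu_{[0,t]}$ to be absolutely continuous with density $L(t,\cdot)\in L^2(\R)$, and moreover shows $L\in L^2(\R\times\Omega)$.

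I would then use the \sas\ structure of $X$. The increment $X(r)-X(s)=\int_\R\bigl(f(r,x)-f(s,x)\bigr)M(dx)$ is a real \sas\ random variable with scale parameter $\norm{X(r)-X(s)}_\alpha$, so $\ex{e^{iu(X(r)-X(s))}}=\exp\bigl(-\abs{u}^\alpha\norm{X(r)-X(s)}_\alpha^\alpha\bigr)$. All the integrands are nonnegative, so Tonelli's theorem applies throughout; integrating in $u$ first and substituting $v=u\,\norm{X(r)-X(s)}_\alpha$ gives $\int_\R\exp(-\abs{u}^\alpha\norm{X(r)-X(s)}_\alpha^\alpha)\,du=C\,\norm{X(r)-X(s)}_\alpha^{-1}$ with $C=\int_\R e^{-\abs{v}^\alpha}\,dv<\infty$. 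So the statement reduces to the finiteness of $\int_{[0,t]^2}\norm{X(r)-X(s)}_\alpha^{-1}\,dr\,ds$.

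For the last integral I would split $[0,t]^2$ into $\{\abs{r-s}<\delta\}$ and its complement, with $\delta$ as in \eqref{Xcont}, shrunk if necessary so that $\delta<1$. On the first region the lower bound of \eqref{Xcont} gives $\norm{X(r)-X(s)}_\alpha\ge C_1\abs{r-s}^{\wH(r,s)}\ge C_1\abs{r-s}^{\cH}$ (using $\abs{r-s}<1$ and $\wH(r,s)\le\cH$), and $\int_0^t\int_0^t\abs{r-s}^{-\cH}\,dr\,ds<\infty$ since $\cH<1$. On the complement, $(r,s)\mapsto\norm{X(r)-X(s)}_\alpha$ is continuous — $r\mapsto X(r)$ is continuous in the $L^\alpha(\R)$-isometry norm by the upper bound in \eqref{Xcont} — and strictly positive for $r\ne s$ (the kernels $f(r,\cdot)$, $f(s,\cdot)$ differ on a set of positive Lebesgue measure, as one sees from their behaviour near $x=0$), hence it is bounded below by a positive constant on the compact set $\{(r,s)\in[0,t]^2:\abs{r-s}\ge\delta\}$ and that contribution is finite too. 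This proves $\ex{\int_\R L(t,x)^2\,dx}<\infty$ and hence the proposition. The only process-specific input is the lower bound on $\norm{X(r)-X(s)}_\alpha$, which is immediate from \eqref{Xcont} near the diagonal and from continuity plus positivity away from it, so I do not anticipate a genuine obstacle; the only manipulation that needs a word of justification is the use of Plancherel together with the interchange of $\ex{\cdot}$ and integration, and this is clean precisely because every integrand is nonnegative after the $u$-integration, so Tonelli applies and finiteness of the reduced double integral simultaneously validates all the steps.
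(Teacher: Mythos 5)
Your argument is correct, and it differs from the paper's mainly in how much it re-derives. The paper does not run the Berman $L^2$-Fourier argument itself: it cites the criterion of \cite{dozzietal} (``condition $(\mathcal H)$''), which only asks for a near-diagonal bound $\abs{\ex{e^{i\la(X(t)-X(s))}}}\le \psi(\la\abs{t-s}^H)$ with $\psi\in L^1(\R)$, and verifies it in two lines from the \sas\ characteristic function and the lower bound in \eqref{Xcont} — exactly the same analytic input you use. You instead unfold the classical argument from first principles (Plancherel plus $\ex{\int_\R\abs{\hat L(t,u)}^2du}<\infty$), which makes the proof self-contained but obliges you to control $\norm{X(r)-X(s)}_\alpha^{-1}$ also away from the diagonal; your continuity-plus-positivity-plus-compactness argument for that region is sound (the positivity claim, argued from the behaviour of the kernels near $x=0$, does hold, though it deserves the two lines of detail you only sketch), and it could be avoided altogether by noting that absolute continuity of the occupation measure is additive over a partition of $[0,t]$ into intervals of length less than $\delta$, which is implicitly why the cited condition $(\mathcal H)$ needs only $\abs{t-s}<\rho$. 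Your exponent bookkeeping near the diagonal ($\norm{X(r)-X(s)}_\alpha\ge C_1\abs{r-s}^{\wH(r,s)}\ge C_1\abs{r-s}^{\cH}$ for $\abs{r-s}<1$, integrable since $\cH<1$) is in fact the correct way to state the bound; the paper's displayed chain ends with the exponent $\wH$ where $\cH$ is what the inequality actually delivers, an inconsequential slip since condition $(\mathcal H)$ only requires some $H\in(0,1)$. In short: same key lemma (\eqref{Xcont} plus the stable characteristic function), with your route more elementary and reference-free at the cost of one extra (avoidable) off-diagonal step.
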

\begin{proof}
According to \cite{dozzietal}, it is enough to check the following ``condition ($\mathcal{H}$)'': there exists $\rho>0$ and $H\in(0,1)$ and $\psi\in L^1(\mathbb R)$ such that for all $\abs{t-s}<\rho$
\begin{equation}
\label{condH}
\abs{\ex{\exp\set{i\lambda\big({X(t)-X(s)}\big)}}}\le \psi(\la\abs{t-s}^H).
\end{equation}

But, in view of \eqref{Xcont}, for $t$ and $s$ close enough
\begin{gather*}
\abs{\ex{\exp\set{i\lambda\big({X(t)-X(s)}\big)}}}= \exp\set{-\abs{\la}^\alpha \norm{X(t)-X(s)}_\alpha^\alpha}
\\
\le \exp\set{-C\abs{\lambda}^\alpha \norm{t-s}^{\alpha\cH(t,s)}}\le \exp\set{-C\abs{\lambda}^\alpha \norm{t-s}^{\alpha\wH}},
\end{gather*}
whence we have \eqref{condH} with $\psi = \exp\set{-C{\abs{x}^\alpha}}$, $H=\wH$.
\end{proof}
In order to prove further properties, we need
\begin{definition}[\cite{nolan}]
A stable random process $X$ is \emph{$\norm{\cdot}_\alpha$ locally non-deterministic} (LND) on $\mathbf{T}$ if
\begin{enumerate}[(L1)]
\item $\norm{X(t)}_\alpha >0$ for all $t\in \mathbf{T}$;
\item $\norm{X(t)-X(s)}_\alpha>0$ for all sufficiently close distinct $s,t\in\mathbf{T}$;
\item for any $n>1$ there exists $C_n>0$ s.t. for any $t_1<t_2<\dots <t_n\in \mathbf T$ sufficiently close together
one has
\begin{equation}\label{lnd}
\norm{X(t_n) - \spann\set{X(t_1),\dots,X(t_{n-1})}}_\alpha\ge C_n\norm{X(t_n)-X(t_{n-1})}_\alpha.
\end{equation}
\end{enumerate}
\end{definition}

In \cite{nolan} it is shown that the local non-determinism property is equivalent to the property of \emph{$\norm{\cdot}_\alpha$ locally approximately independent increments}, which consists of properties (L1), (L2) above and
\begin{enumerate}[(L{3}a)]
\item for any $n>1$ there exists $C_n$ s.t. for any $t_1<t_2<\dots <t_n\in \mathbf T$ sufficiently close together
and any $a_1,\dots,a_n\in\mathbb{R}$ one has
\begin{equation}\label{aii}\begin{gathered}
\norm{a_1 X(t_1) +\sum_{k=1}^{n-1} a_k \big(X(t_{k+1})-X(t_{k})\big)}_\alpha\\\ge C_n\left(\norm{a_1 X(t_1)}_\alpha +\sum_{k=1}^{n-1} \norm{a_k \big(X(t_{k+1})-X(t_{k})\big)}_\alpha\right).
\end{gathered}
\end{equation}
\end{enumerate}

\begin{theorem}\label{mainthm}
For any $\varepsilon>0$ the rhmsp $X$ is LND on $[\varepsilon,T]$.
\end{theorem}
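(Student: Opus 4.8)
The plan is to verify Nolan's three conditions (L1), (L2), (L3) on $[\varepsilon,T]$. Conditions (L1) and (L2) come for free from what is already proved: $\norm{X(t)}_\alpha^\alpha=\int_\R\abs{e^{itx}-1}^\alpha\abs{x}^{-1-\alpha H(t)}\,dx>0$ for every $t\ge\varepsilon>0$, the integrand being strictly positive on a set of positive Lebesgue measure, which is (L1); and (L2) is exactly the lower bound in \eqref{Xcont}, $\norm{X(t)-X(s)}_\alpha\ge C_1\abs{t-s}^{\wH(t,s)}>0$ for $0<\abs{t-s}<\delta$. All the work is in (L3).

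Fix $n>1$. I want $\delta_n,C_n>0$ such that \eqref{lnd} holds whenever $t_1<\dots<t_n$ lie in a subinterval of $[\varepsilon,T]$ of length $\le\delta_n$. Put $r=t_n-t_{n-1}$; by \eqref{Xcont}, $\norm{X(t_n)-X(t_{n-1})}_\alpha\asymp r^{H(t_n)}$, so it suffices to prove that, uniformly in $b_1,\dots,b_{n-1}\in\R$,
\begin{gather*}
\norm{X(t_n)-\sum_{j=1}^{n-1}b_jX(t_j)}_\alpha^\alpha=\int_\R\abs{g(x)}^\alpha\abs{x}^{-1}\,dx\ \ge\ C_n\,r^{\alpha H(t_n)},\\
g(x)=\frac{e^{it_nx}-1}{\abs{x}^{H(t_n)}}-\sum_{j=1}^{n-1}b_j\,\frac{e^{it_jx}-1}{\abs{x}^{H(t_j)}}.
\end{gather*}
I would throw away all of $\R$ except the annulus $A=\set{\Lambda\le\abs{x}\le 2\Lambda}$ with $\Lambda=C_0/r$, $C_0=C_0(n)$ a large constant to be fixed. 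On $A$ one has $g(x)=\abs{x}^{-H(t_n)}P(x)$ with
\[
P(x)=e^{it_nx}-1-\sum_{j=1}^{n-1}b_j(e^{it_jx}-1)\abs{x}^{H(t_n)-H(t_j)},
\]
and $\abs{x}^{-1-\alpha H(t_n)}\asymp\Lambda^{-1-\alpha H(t_n)}$ throughout $A$, so after pulling that factor out the whole matter reduces to a normalized estimate
\[
\frac1\Lambda\int_A\abs{P(x)}^\alpha\,dx\ \ge\ c_n\ >\ 0
\]
uniform in the $b_j$: indeed this gives $\int_\R\abs{g}^\alpha\abs{x}^{-1}\,dx\gtrsim c_n\Lambda^{-\alpha H(t_n)}=c_nC_0^{-\alpha H(t_n)}r^{\alpha H(t_n)}\ge c_nC_0^{-\alpha\cH}r^{\alpha H(t_n)}$, i.e.\ \eqref{lnd} with $C_n$ chosen small, the power $C_0^{-\alpha H(t_n)}$ being bounded below since $H(t_n)\le\cH<1$. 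Note it is here that the multifractionality is absorbed: all the exponents $H(t_j)$ have been replaced by the single $H(t_n)$, at the price of the slowly-varying factors $\abs{x}^{H(t_n)-H(t_j)}$ sitting inside $P$.

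The normalized estimate is a statement about functions whose oscillation is confined to low frequencies. The subtracted part of $P$ is a linear combination of the exponentials $e^{it_jx}$, $j<n$, and of their slowly-varying modulations $e^{it_jx}\abs{x}^{H(t_n)-H(t_j)}$ and of $\abs{x}^{H(t_n)-H(t_j)}$, all carrying ``frequency'' $\le t_{n-1}=t_n-r$, whereas $t_n$ is separated from every one of $0,t_1,\dots,t_{n-1}$ by at least $r=C_0/\Lambda$; after the substitution $x=\Lambda(1+s)$, $s\in[0,1]$, the top frequency $t_n\Lambda$ thus lies at distance $\ge C_0$ from all the others. Testing $P$ against the probe $e^{-it_nx}$ (or against a fixed smooth bump whose Fourier transform is concentrated near $t_n\Lambda$) and integrating by parts, one exploits that $e^{i(t_j-t_n)x}$ makes $\gtrsim C_0$ oscillations across $A$ while $\abs{x}^{H(t_n)-H(t_j)}$ stays within a bounded factor of $1$ on $A$ --- the latter because $\abs{H(t_n)-H(t_j)}\le C\abs{t_n-t_j}^\gamma$ with $\gamma>0$ keeps $\abs{t_n-t_j}^\gamma\log\abs{x}$ under control on $A$. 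One then finds that the entire subtracted part contributes only $O(1/C_0)$ to the ``$t_n\Lambda$-coefficient'', while $e^{it_nx}$ contributes exactly $1$; taking $C_0$ large forces $\tfrac1\Lambda\int_A\abs P^\alpha\,dx$ away from $0$ (the $L^1(A)$-average being dominated by the $L^\alpha(A)$-average, $\alpha>1$), which is $c_n$.

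The delicate point --- and the one I expect to be the main obstacle --- is making the last estimate genuinely uniform in $b_1,\dots,b_{n-1}$. The naive argument controls the subtracted part only through $\sum_j\abs{b_j}$, which is \emph{not} bounded in terms of $\norm{\cdot}_{L^\alpha(A)}$: when several of the lower times cluster much closer together than $r$, the corresponding exponentials become nearly linearly dependent on $A$ and large coefficients can cancel. One must therefore cluster the frequencies $0,t_1,\dots,t_{n-1}$, treat each cluster (via divided differences / a Bernstein-type bound for the slowly-varying modulations) as a bounded-dimensional low-frequency block, and argue that the span of $1,e^{it_1x},\dots,e^{it_{n-1}x}$ and their modulations still keeps its spectrum inside $[0,t_{n-1}]$, hence cannot come near $e^{it_nx}$ in $L^\alpha(A)$ --- alternatively, one can run an induction on $n$, peeling off the cluster nearest $t_n$; the multifractional factors stay harmless throughout because $\gamma>\cH$ makes the earlier norm estimates (and these slowly-varying corrections) of strictly lower order. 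Once (L1), (L2), (L3) are verified, $X$ is LND on $[\varepsilon,T]$, and Nolan's equivalence simultaneously yields the locally-approximately-independent-increments form, i.e.\ (L3a).
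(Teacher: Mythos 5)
Your reduction of (L1) and (L2) to \eqref{Xcont} is fine, and so is the reformulation of (L3) as a lower bound for $\inf_{b}\norm{X(t_n)-\sum_{j<n}b_jX(t_j)}_\alpha$ against $r^{H(t_n)}$, $r=t_n-t_{n-1}$. But the heart of the matter --- the bound $\frac1\Lambda\int_A\abs{P(x)}^\alpha dx\ge c_n$ \emph{uniformly in} $b_1,\dots,b_{n-1}$ --- is exactly where your argument stops. Testing against $e^{-it_nx}$ controls the subtracted part only by $O\big(\sum_j\abs{b_j}/C_0\big)$, which is vacuous for large coefficients, and as you yourself note, large coefficients with nearly cancelling terms are unavoidable when the $t_j$ cluster at scales much finer than $r$. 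The fixes you gesture at (clustering, divided differences, ``Bernstein-type'' bounds, induction peeling off the cluster nearest $t_n$) are not carried out, and they are not routine here: because of the modulations $\abs{x}^{H(t_n)-H(t_j)}$ the subtracted span is \emph{not} band-limited, so the claim that its ``spectrum stays inside $[0,t_{n-1}]$'' is not literally true, and a quantitative $L^\alpha(A)$-separation from $e^{it_nx}$ with a constant depending only on $n$ (uniform over all configurations of clustered $t_j$'s and all $b_j$'s) is precisely the content of LND --- it cannot be asserted, it is the thing to be proved. So the proposal has a genuine gap at its key step.

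For comparison, the paper gets this uniformity for free by a different device. One first replaces the kernel by $f_Y(t,x)=(1-e^{-itx})(-ix)^{-H(t)-1/\alpha}$ (a rotation of $f_X$ by a phase depending only on $H(t)$ and $\sign x$), whose Fourier transform on $L^\alpha$ is the one-sided, moving-average-type kernel $\frac{1}{\Gamma(H(t)+1/\alpha)}\big((t-x)_+^{H(t)-1/\beta}-(-x)_+^{H(t)-1/\beta}\big)$ (Lemma~\ref{ftlema}). By the Hausdorff--Young inequality, $\norm{f_Y(t_n,\cdot)-\sum_{k<n}u_kf_Y(t_k,\cdot)}_{L^\alpha(\R)}\ge C\norm{\widehat{f_Y}(t_n,\cdot)-\sum_{k<n}u_k\widehat{f_Y}(t_k,\cdot)}_{L^\beta(\R)}$, and restricting the right-hand side to $[t_{n-1},t_n]$ annihilates \emph{identically} every term with $k<n$ (here $t_1\ge\varepsilon>0$ is used), regardless of the coefficients, leaving $\ge C(t_n-t_{n-1})^{H(t_n)}$. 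The passage back from $Y$ to $X$ is then a perturbation of the phase, of size $O(\abs{t-s}^\gamma)=o(\norm{X(t)-X(s)}_\alpha)$ thanks to $\gamma>\cH$. If you want to salvage your direct frequency-separation approach, you would need an analogous mechanism that kills the lower-frequency span exactly (not term by term with $O(1/C_0)$ errors); as written, the proof is incomplete.
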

\begin{proof}
The main difficulty is to prove property (L3) of LND, as property (L1) is obvious and property (L2) follows from \eqref{Xcont}.

We proceed in two steps.

\emph{Step 1}. We prove LND for a modification of rhmsp $X$  defined by
\begin{equation}\label{Yt}
Y(t) =  \int_{\R} (1-e^{-i\spr{tx}})(-ix)^{-H(t)-1/\alpha} M(dx),
\end{equation}
where
$$
(-ix)^{-K} = \abs{x}^{-K} e^{i\pi K \sign x/2 }.
$$
The Fourier transform of the function $f_Y(t,x) = (1-e^{-i\spr{tx}})(-ix)^{-H(t)-1/\alpha}$ (w.r.t.\ the second variable)
on $L^\alpha(\R)$ is
\begin{equation}
\widehat{f_Y}(t,x) = \frac{1}{\Gamma(H(t)+1/\alpha)}\left((t-x)_+^{H(t)-1/\beta}-(-x)_+^{H(t)-1/\beta}\right),
\end{equation}
where $\beta = \alpha/(\alpha-1)$ is the exponent  adjoint to $\alpha$, see Lemma~\ref{ftlema}.

In order to check property (L3) for $Y$, we have to find a good lower bound to
\begin{gather*}
\norm{Y(t_n) - \sum_{k=1}^{n-1} u_k Y(t_k)}_\alpha =
\norm{f_Y(t_n,\cdot) - \sum_{k=1}^{n-1} u_k f_Y(t_k,\cdot)}_{L^\alpha(\R)}.
\end{gather*}
It is fortunately given by the Hausdorff-Young inequality:
\begin{gather*}
\norm{f_Y(t_n,\cdot) - \sum_{k=1}^{n-1} u_k f_Y(t_k,\cdot)}_{L^\alpha(\R)}\ge
C \norm{ \widehat{f_Y}(t_n,\cdot) - \sum_{k=1}^{n-1} u_k \widehat{f_Y}(t_k,\cdot)}_{L^\beta(\R)}\\
\ge C \norm{ \widehat{f_Y}(t_n,\cdot)}_{L^\beta([t_{n-1},t_n])}= C\left(\frac{1}{\Gamma(H(t_n)+1/\alpha)^\beta}\int_{t_{n-1}}^{t_n} (t_n-x)^{\beta H(t_n) - 1}dx\right)^{1/\beta}\\
\ge C (t_n-t_{n-1})^{H(t_n)}\ge C\norm{X(t_n)-X(t_{n-1})}_\alpha
\end{gather*}
for $t_n$ and $t_{n-1}$ close enough. (We have used the fact that $\widehat{f_Y}(t_k,x)$ vanishes on $[t_{n-1},t_n]$ for $k<n$ in the middle, and inequality \eqref{Xcont} in the last step.) But it is straightforward to check (see a much stronger statement below in the Step 2) that
\begin{gather*}
\norm{X(t_n)-X(t_{n-1})}_\alpha\ge
\norm{Y(t_n)-Y(t_{n-1})}_\alpha - C|H(t_n)-H(t_{n-1})|\\\ge \norm{Y(t_n)-Y(t_{n-1})}_\alpha - C|t_n-t_{n-1}|^\gamma
\ge C\norm{Y(t_n)-Y(t_{n-1})}_\alpha,
\end{gather*}
which gives the desired LND property.

\emph{Step 2}. Here we show how the property of locally asymptotically independent increments for $Y$ implies that for $X$.
Denote $f_X(t,x) = (e^{itx}-1)\abs{x}^{-H(t)-1/\alpha}$, $f_{1}(t,x) = - f_Y(t,-x) = f_X(t,x) e^{-i\pi (H(t)+1/\alpha) \sign x/2 }$ and
write for $0 < t_1<t_2<\dots <t_n<T$ and $a_1,\dots,a_n\in \R$ (we put $t_0=0$ for the sake of simplicity)
\begin{gather*}
\norm{\sum_{k=1}^{n-1} a_k \big(X(t_{k+1})-X(t_{k})\big)}_\alpha
= \norm{\sum_{k=0}^{n-1} a_k \big(f_X(t_{k+1},\cdot)-f_X(t_k,\cdot)\big)}_{L^\alpha(\R)}\\
= \norm{ e^{-i \pi (H(t_1)+1/\alpha)\sign x/2}\sum_{k=0}^{n-1} a_k \big(f_X(t_{k+1},\cdot)-f_X(t_k,\cdot)\big)}_{L^\alpha(\R)}\\
\ge \norm{ \sum_{k=0}^{n-1} a_k \big(f_1(t_{k+1},\cdot)-f_1(t_k,\cdot)\big)}_{L^\alpha(\R)}
- \sum_{k=0}^{n-1}\norm{a_k \big(\Delta(t_{k+1},\cdot)-\Delta(t_k,\cdot)\big)}_{L^\alpha(\R)}\\
\end{gather*}
where \begin{gather*}
\Delta(t,x) = e^{-i \pi (H(t_1)+1/\alpha)\sign x/2} f_X(t,x) - f_1(t,x)\\ = e^{-i\pi\sign x/(2\alpha)}(e^{-i\pi H(t_1) \sign x/2}-e^{-i\pi H(t) \sign x/2}) f_X(t,x).
\end{gather*}
Estimate
\begin{gather*}
\norm{\Delta(t_{k+1},\cdot) - \Delta(t_{k},\cdot)}_{L^\alpha(\R)}\\\le \norm{(e^{-i\pi H(t_{k+1}) \sign x/2}-e^{-i\pi H(t_k) \sign x/2}) f_X(t_{k+1},\cdot)}_{L^\alpha(\R)}
\\+ \norm{(e^{-i\pi H(t_k) \sign x/2}-e^{-i\pi H(t_1) \sign x/2}) \big(f_X(t_{k+1},\cdot)-f_X(t_k,\cdot)\big)}_{L^\alpha(\R)}\\
\le C\abs{H(t_{k+1})-H(t_k)} \norm{f_X(t_{k+1},\cdot)}_{L^\alpha(\R)} \\+ C\abs{H(t_{k+1})-H(t_1)}\norm{f_X(t_{k+1},\cdot)-f_X(t_k,\cdot)\big)}_{L^\alpha(\R)}\\
\le C\abs{t_{k+1}-t_k}^{\gamma} \norm{X(t_{k+1})}_\alpha + C \abs{t_{k+1}-t_1}^{\gamma} \norm{X(t_k)-X(t_k+1)}_\alpha
\\= o\big(\norm{X(t_{k+1})-X(t_k)}_\alpha\big), \quad \abs{t_n-t_1}\to \infty.
\end{gather*}
Further,
\begin{gather*}
\norm{ \sum_{k=0}^{n-1} a_k \big(f_1(t_{k+1},\cdot)-f_1(t_k,\cdot)\big)}_{L^\alpha(\R)}
=\norm{ \sum_{k=0}^{n-1} a_k \big(f_Y(t_{k+1},\cdot)-f_Y(t_k,\cdot)\big)}_{L^\alpha(\R)} \\
=\norm{ \sum_{k=0}^{n-1} a_k \big(Y(t_{k+1})-Y(t_k)\big)}_{\alpha}\ge  C\sum_{k=0}^{n-1} \norm{a_k \big(Y(t_{k+1})-Y(t_k)\big)}_{\alpha},
\end{gather*}
where the last inequality is true thanks to LND property of $Y$. Similarly to the first estimate of Step 2,
\begin{gather*}
\norm{Y(t_{k+1})-Y(t_k)}_{\alpha} = \norm{f_Y(t_{k+1},\cdot)-f_Y(t_k,\cdot)}_{L^\alpha(\R)} = \norm{f_1(t_{k+1},\cdot)-f_1(t_k,\cdot)}_{L^\alpha(\R)}\\\ge \norm{X(t_{k+1})-X(t_k)}_{\alpha} - \norm{\Delta(t_{k+1},\cdot) - \Delta(t_{k},\cdot)}_{L^\alpha(\R)}\\
\ge \norm{X(t_{k+1})-X(t_k)}_{\alpha} - o\big(\norm{X(t_{k+1})-X(t_k)}_\alpha\big),
\end{gather*}
so finally
\begin{gather*}
\norm{\sum_{k=1}^{n-1} a_k \big(X(t_{k+1})-X(t_{k})\big)}_\alpha\\\ge \sum_{k=1}^{n-1} \abs{a_k}\Big(\norm{X(t_{k+1})-X(t_{k})}_\alpha- o\big(\norm{X(t_{k+1})-X(t_{k})}_\alpha\big)\Big)\\
\ge C\sum_{k=1}^{n-1} \abs{a_k}\norm{\big(X(t_{k+1})-X(t_{k})\big)}_\alpha
\end{gather*}
for $\abs{t_n-t_1}$ small enough.
\end{proof}
\begin{remark}
It is possible to make the presented proof shorter by skipping several lines at the end of Step 1: in fact the interim lower estimate by $\norm{X(t_n)-X(t_{n-1})}_\alpha$ is exactly what is needed in proof, and one does not need to go further obtaining the LND for $Y$. Nevertheless,
these lines makes the proof more structured, and we think that this intermediate result of LND for $Y$ is interesting on its own.
\end{remark}

Thanks to \cite[Theorem 4.1]{nolan} and estimates for the norms of increments of rhmsp $X$ we have whence the following result.
\begin{theorem}\label{mainthm-1}
The local time $L(t,x)$ of the rhmsp $X$ is jointly continuous in $(t,x)$ for $t>0$, moreover, for any $\kappa<(1/\cH-1)/2$ it is $\kappa$-H\"older continuous in $x$.
\end{theorem}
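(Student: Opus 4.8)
The plan is to reduce the statement to the general theory of local times of locally non-deterministic \sas\ processes. The rhmsp $X$ has already been shown to be LND on every interval $[\eps,T]$ (Theorem~\ref{mainthm}) and to obey the two-sided increment estimate \eqref{Xcont}; granted these facts the conclusion is a direct application of \cite[Theorem~4.1]{nolan}. Concretely I would fix $\eps>0$ and $T>0$, work on $[\eps,T]$, and then recover joint continuity on $(0,\infty)\times\R$ by covering $(0,\infty)$ with such intervals and letting $\eps\downarrow0$.

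The core of an unpacked proof is the estimation of moments of increments of $L$ via the identities recalled in Section~\ref{sec:pre-req}. Since $X$ is \sas, in
\[
\ex{\big(L(t+h,x)-L(t,x)\big)^m}=\frac{1}{(2\pi)^m}\int_{[t,t+h]^m}\int_{\R^m}e^{-ix\sum_j u_j}\,\ex{e^{i\sum_j u_j X(s_j)}}\,\prod_j du_j\,\prod_j ds_j
\]
one has $\abs{\ex{e^{i\sum_j u_j X(s_j)}}}=\exp\set{-\norm{\sum_j u_j X(s_j)}_\alpha^\alpha}$. Ordering $s_1<\dots<s_m$ and decomposing $\sum_j u_j X(s_j)$ by Abel summation into the ``atom'' $\big(\sum_l u_l\big)X(s_1)$ together with the increments $\big(\sum_{l\ge j}u_l\big)(X(s_j)-X(s_{j-1}))$, $j\ge2$, the locally approximately independent increments form \eqref{aii} of LND gives, for the $s_j$ sufficiently close together,
\[
\norm{\textstyle\sum_j u_j X(s_j)}_\alpha^\alpha\ \ge\ C_m\sum_{j=2}^m\Big|\textstyle\sum_{l\ge j}u_l\Big|^\alpha\,\norm{X(s_j)-X(s_{j-1})}_\alpha^\alpha .
\]
Inserting the lower bound of \eqref{Xcont} --- used for small gaps in the form $\norm{X(s_j)-X(s_{j-1})}_\alpha\ge C\abs{s_j-s_{j-1}}^{\cH}$, since $\wH(s_j,s_{j-1})\le\cH$ --- then performing the one-dimensional Gaussian-type $u$-integrals after the substitution $v_j=\sum_{l\ge j}u_l$ and integrating over the ordered simplex in the $s$-variables leads to a moment bound of the shape $\ex{(L(t+h,x)-L(t,x))^m}\le C^m (m!)^{\cH}\,h^{m(1-\cH)}$, uniformly in $x\in\R$ and $t\in[\eps,T]$. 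The rectangular-increment identity carries the extra factor $\prod_j(e^{-iyu_j}-e^{-ixu_j})$, which one estimates by $\prod_j C\abs{x-y}^{b}\abs{u_j}^{b}$ for $b$ small enough that the $s$-integral still converges (namely $b<1/\cH-1$); optimizing over $b$ produces a bound $\ex{(L(t+h,y)-L(t,y)-L(t+h,x)+L(t,x))^m}\le C^m (m!)^{\theta}\abs{x-y}^{m\kappa}\,h^{m\rho}$ for every $\kappa<\tfrac12(1/\cH-1)$ and some $\theta,\rho>0$. These are exactly the inputs of the multiparameter Kolmogorov / Garsia--Rodemich--Rumsey argument that underlies \cite[Theorem~4.1]{nolan}, which then delivers a jointly continuous modification of $L$ that is, in the spatial variable, Hölder continuous of every order $\kappa<\tfrac12(1/\cH-1)$.

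The main obstacle, and the only place where genuine care is needed, is the dependence on $m$ of the LND constant $C_m$ in \eqref{lnd}/\eqref{aii}: it must not decay faster than geometrically --- equivalently, the moment bounds above must retain factorial-type growth $(m!)^{O(1)}$ --- so that the chaining converges. This book-keeping is exactly what \cite[Theorem~4.1]{nolan} carries out, so for the write-up it suffices to verify the hypotheses of that theorem for $X$ on $[\eps,T]$: $\norm{X(t)}_\alpha>0$, $\norm{X(t)-X(s)}_\alpha>0$ for close distinct $s,t$, the LND inequality, and the power estimate \eqref{Xcont}. All of these are already in hand from Theorem~\ref{mainthm} and the proposition preceding it. Finally, two remarks should appear in the text: it is $\cH$ rather than $\wH$ that governs the exponent because the smoothness of $L(t,\cdot)$ is limited by the roughest local scaling exhibited by $X$, which corresponds to $\cH=\sup_tH(t)$; and the restriction $t>0$ is unavoidable since LND, and hence the whole argument, is available only on intervals bounded away from the origin.
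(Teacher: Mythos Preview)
Your proposal is correct and follows exactly the paper's approach: the paper's proof consists entirely of invoking \cite[Theorem~4.1]{nolan} together with the LND property of Theorem~\ref{mainthm} and the increment estimates~\eqref{Xcont}, and you do the same, merely unpacking in more detail what Nolan's theorem does internally. Your remarks on why $\cH$ governs the H\"older exponent and why $t>0$ is needed are apt and could usefully be retained.
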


\begin{remark}[Gaussian case]
It is not hard to see that all results of this paper extend to the Gaussian case, viz $\alpha=2$. In fact all the proofs work in the
Gaussian case as well, except the one of the pathwise continuity, but there one has a much shorter and direct proof. Also, in the
proof of Theorem~\ref{mainthm} one does not need the Hausdorff--Young inequality, but just Parseval's identity.

It should be noted that some papers mistakenly claim that in Gaussian case, the real harmonizable multifractional Brownian motion
is up to a multiplicative constant equivalent to moving average (or linear) multifractional Brownian motion, as defined e.g.\ in \cite{dozzietal}.
The difference between the two definitions is shown in \cite{dobricojeda} to be essential. Nevertheless, this claim is true for the process $Y$
defined in the proof of  Theorem~\ref{mainthm}.
\end{remark}

\appendix
\section{Fourier transform} \renewcommand{\thesection}{A}
In this appendix we compute the Fourier transform which is used by many authors, however, we were not able
to find a rigorous derivation. Below we define the Fourier transform
$$
\widehat f(u) = \int_{\R} e^{i u x} f(x) dx,
$$
and use the notation $x_+ = x\vee 0$.
\begin{lemma}\label{ftlem}
For $h\in(1,2) $, $t>0$ the Fourier transform of
$$
f_{h,t}(x) = (1-e^{-itx})(-ix)^{-h} = (1-e^{-itx}) \abs{x}^{-h} e^{i\pi h \sign x/2 }
$$
is
$$
\widehat{f_{h,t}}(u) = \frac{2\pi}{\Gamma(h)}\big((t-u)_+^{h-1}-(-u)_+^{h-1}).
$$
\end{lemma}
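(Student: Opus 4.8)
The plan is to evaluate $\widehat{f_{h,t}}(u)=\int_\R e^{iux}f_{h,t}(x)\,dx$ directly, by splitting $\R$ into $(0,\infty)$ and $(-\infty,0)$, reducing everything to the classical one‑sided power‑law Fourier integral, and then recombining. First I would note that $f_{h,t}\in L^1(\R)$: near $0$ one has $\abs{f_{h,t}(x)}=\abs{1-e^{-itx}}\abs{x}^{-h}=O(\abs{x}^{1-h})$ with $1-h>-1$, and near infinity $\abs{f_{h,t}(x)}=O(\abs{x}^{-h})$ with $h>1$, so $\widehat{f_{h,t}}$ is a genuine bounded continuous function given pointwise by the Lebesgue integral. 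It is essential to resist splitting $f_{h,t}=(-ix)^{-h}-e^{-itx}(-ix)^{-h}$, since $(-ix)^{-h}\notin L^1$ near $0$ — the cancellation inside $1-e^{-itx}$ must be kept. Using $(-ix)^{-h}=\abs{x}^{-h}e^{i\pi h\sign x/2}$ and the substitution $x\mapsto-x$ on the negative half‑line, I would instead write
$$
\widehat{f_{h,t}}(u)=e^{i\pi h/2}\int_0^\infty\big(e^{iux}-e^{i(u-t)x}\big)x^{-h}\,dx+e^{-i\pi h/2}\int_0^\infty\big(e^{-iux}-e^{i(t-u)x}\big)x^{-h}\,dx .
$$

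The analytic core is the identity, for real $a,b$ and the principal branch of $z\mapsto z^{h-1}$,
$$
\int_0^\infty\big(e^{iax}-e^{ibx}\big)x^{-h}\,dx=\Gamma(1-h)\big((-ia)^{h-1}-(-ib)^{h-1}\big).
$$
This is where the real work lies, and I expect it to be the main obstacle: since $h\in(1,2)$, $x^{-h}$ is not integrable at $0$, so the elementary formula $\int_0^\infty e^{-px}x^{s-1}\,dx=\Gamma(s)p^{-s}$ cannot be applied termwise. I would obtain it either by writing $e^{iax}-e^{ibx}=(e^{iax}-1)-(e^{ibx}-1)$ and invoking the classical $\int_0^\infty(e^{iax}-1)x^{s-1}\,dx=\Gamma(s)(-ia)^{-s}$, valid for $-1<\re s<0$, at $s=1-h$; or, self‑containedly, by inserting a damping factor $e^{-\eps x}$, integrating by parts once to replace $x^{-h}$ by $x^{1-h}$ (whose exponent now lies in $(-1,0)$, where $\int_0^\infty e^{-(\eps-ia)x}x^{1-h}\,dx=\Gamma(2-h)(\eps-ia)^{h-2}$ is legitimate), and then letting $\eps\downarrow0$ while tracking the argument of $\eps-ia\to-ia$. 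The boundary terms in the integration by parts vanish precisely because of the $O(x)$ cancellation retained above; one then simplifies using $a(-ia)^{h-2}=-(-ia)^{h-1}$ and $\Gamma(2-h)=(1-h)\Gamma(1-h)$.

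Applying this with $(a,b)=(u,u-t)$ and $(a,b)=(-u,t-u)$, and using $-i(-u)=iu$, $-i(t-u)=i(u-t)$, collapses the expression into $\widehat{f_{h,t}}(u)=\Gamma(1-h)\big(Q(u)-Q(u-t)\big)$ with $Q(a)=e^{i\pi h/2}(-ia)^{h-1}+e^{-i\pi h/2}(ia)^{h-1}$. I would then evaluate $Q$ by cases on $\sign a$, using $\arg(\mp ia)=\mp\pi/2$ for $a>0$ and the opposite for $a<0$: for $a>0$ one gets $a^{h-1}\big(e^{i\pi/2}+e^{-i\pi/2}\big)=0$, while for $a<0$ one gets $\abs{a}^{h-1}\big(e^{i\pi(2h-1)/2}+e^{-i\pi(2h-1)/2}\big)=2\abs{a}^{h-1}\cos(\pi h-\tfrac\pi2)=2\sin(\pi h)(-a)_+^{h-1}$, so $Q(a)=2\sin(\pi h)(-a)_+^{h-1}$ for all real $a$. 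Hence $\widehat{f_{h,t}}(u)=2\Gamma(1-h)\sin(\pi h)\big((t-u)_+^{h-1}-(-u)_+^{h-1}\big)$, and the reflection formula $\Gamma(h)\Gamma(1-h)=\pi/\sin(\pi h)$ turns the prefactor $2\Gamma(1-h)\sin(\pi h)$ into $2\pi/\Gamma(h)$, yielding the claimed formula (the overall sign being fixed by, and most safely double‑checked against, the branch bookkeeping in the displayed identity). Everything after that identity is elementary trigonometry together with the reflection formula.
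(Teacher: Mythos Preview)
Your approach is correct and genuinely different from the paper's. You compute the Fourier integral directly: split $\R$ into half-lines, reduce to the one-sided formula $\int_0^\infty(e^{iax}-e^{ibx})x^{-h}\,dx=\Gamma(1-h)\big((-ia)^{h-1}-(-ib)^{h-1}\big)$ (justified by a damping factor plus one integration by parts, which is exactly what is needed since $x^{-h}\notin L^1$ near $0$ but $x^{1-h}$ is), and then assemble the pieces with branch arithmetic and Euler's reflection formula. The paper instead works backwards from the target: starting from the Laplace representation $z^{-h}=\Gamma(h)^{-1}\int_0^\infty e^{-vz}v^{h-1}\,dv$, it invokes the Bromwich inversion $v_+^{h-1}/\Gamma(h)=\frac{1}{2\pi i}\int_{a-i\infty}^{a+i\infty}e^{vz}z^{-h}\,dz$, substitutes $v=-u$ and $v=t-u$, subtracts, and then pushes the contour onto the real axis by dominated convergence. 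The paper's route is slicker in that the answer $(t-u)_+^{h-1}-(-u)_+^{h-1}$ appears at the outset and no trigonometric bookkeeping or reflection formula is needed; your route is more self-contained (no appeal to operational calculus) at the price of the branch tracking you rightly flag as the delicate point. On that last point: following your own computation literally, $Q(u)-Q(u-t)=2\sin(\pi h)\big((-u)_+^{h-1}-(t-u)_+^{h-1}\big)$, the opposite order from what you wrote down, so the final sign does indeed merit the double-check you promise.
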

\begin{proof}
First note that $z^{-h}=\frac{1}{\Gamma(h)}\int_0^\infty e^{-v z} v^{h-1} dv$ is analytic
for $\re z >0$. So it follows from the operational calculus that for any $a>0$
\begin{gather*}
\frac{v_+^{h-1}}{\Gamma(h)} = \frac{1}{2\pi i}\int_{a-i\infty}^{a+i\infty} e^{vz}z^{-h} dz= -\frac{1}{2\pi} \int_{-\infty -ai}^{\infty - ai}  e^{-ivy}(-iy)^{-h} dy,
\end{gather*}
where we have changed the variable $z\to iy$.
Plugging $v=-u$ and $v=t-u$ to this identity, we get
\begin{gather}\label{ft}
\frac{1}{\Gamma(h)}\big((t-u)_+^{h-1}-(-u)_+^{h-1}) = \frac{1}{2\pi} \int_{-\infty -ai}^{\infty - ai}  e^{iuy}(1-e^{-ity})(-iy)^{-h}  dy,
\end{gather}
Now let in this integral $y= x-ai$, $x\in \R$ and estimate for $a\in(0,1)$ the integrand as
\begin{equation}
\label{ftineq}
\begin{gathered}
\abs{e^{iuy}(1-e^{-ity})(-iy)^{-h}} = e^{au}\abs{1-e^{-at -itx}} \abs{y}^{-h}\le C(u) (t\abs{x-ai}\wedge 1 )\abs{y}^{-h} \\= C(u) (t\abs{y}^{-h+1}\wedge |y|^{-h})\le C(u) (t\abs{x}^{-h+1}\wedge |x|^{-h}),
\end{gathered}
\end{equation}
which is integrable due to the assumption $h\in(1,2)$. So letting $a\to 0+$ in \eqref{ft} yields the desired result by the dominated
convergence theorem.
\end{proof}
By the Hausdorff-Young inequality (see \cite[Theorem 5.7]{Lieb}), for $\alpha\in[1,2]$ the Fourier transform from $L^1(\R)\cap L^\alpha(\R)$ can be extended to a
bounded linear operator $\F_\alpha\colon L^\alpha(\R)\to L^\beta(\R)$, where $\beta=\alpha/(\alpha-1)$ is the exponent adjoint
to $\alpha$. We will call this map a \emph{Fourier transform on $L^\alpha(\R)$}, and we emphasize once more its boundedness
due to the Hausdorff-Young inequality:
\begin{equation}\label{hyi}
\norm{\F_\alpha f}_{L^\beta(\R)}\le C_\alpha \norm{f}_{L^\alpha(\R)}.
\end{equation}
The following lemma is an $L^\alpha(\R)$ analogue of Lemma \ref{ftlem}.
\begin{lemma}\label{ftlema}
For $\alpha\in(1,2)$, $h\in (1/\alpha,1+1/\alpha)$ and $t>0$ the Fourier transform on $L^\alpha$ of
$$
f_{h,t}(x) = (1-e^{-itx})(-ix)^{-h} = (1-e^{-itx}) \abs{x}^{-h} e^{i\pi h \sign x/2 }
$$
is
$$
\F_\alpha f_{h,t}(u) = \frac{2\pi}{\Gamma(h)}\big((t-u)_+^{h-1}-(-u)_+^{h-1}).
$$
\end{lemma}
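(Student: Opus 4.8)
The plan is to deduce Lemma~\ref{ftlema} from the classical Lemma~\ref{ftlem} together with the boundedness of $\F_\alpha$ on $L^\alpha(\R)$, by exhibiting $f_{h,t}$ as an $L^\alpha$-limit of truncations that lie in $L^1(\R)\cap L^\alpha(\R)$. First I would check that $f_{h,t}\in L^\alpha(\R)$ in the stated range of $h$: near $0$ one has $\abs{f_{h,t}(x)}\le C(t\wedge 1)\abs{x}^{1-h}$, so $\abs{f_{h,t}}^\alpha\sim \abs{x}^{\alpha(1-h)}$ is integrable near $0$ precisely when $\alpha(1-h)>-1$, i.e. $h<1+1/\alpha$; and near $\infty$ one has $\abs{f_{h,t}(x)}\le C\abs{x}^{-h}$, so $\abs{f_{h,t}}^\alpha\sim\abs{x}^{-\alpha h}$ is integrable when $\alpha h>1$, i.e. $h>1/\alpha$. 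Thus $f_{h,t}\in L^\alpha(\R)$ exactly on $(1/\alpha,1+1/\alpha)$, and $\F_\alpha f_{h,t}$ is a well-defined element of $L^\beta(\R)$.

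Next I would set $f_n(x) = f_{h,t}(x)\ind{1/n\le\abs{x}\le n}$. Each $f_n$ is bounded and compactly supported, hence in $L^1(\R)\cap L^\alpha(\R)$, so its $L^\alpha$-Fourier transform agrees with the ordinary integral $\widehat{f_n}(u)=\int_{\R}e^{iux}f_n(x)\,dx$. By dominated convergence $f_n\to f_{h,t}$ in $L^\alpha(\R)$ (the dominating function is $\abs{f_{h,t}}^\alpha$, which we just showed is integrable), so by \eqref{hyi} we get $\F_\alpha f_n\to \F_\alpha f_{h,t}$ in $L^\beta(\R)$, and along a subsequence the convergence holds a.e. in $u$. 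It therefore suffices to show $\widehat{f_n}(u)\to \widehat{f_{h,t}}(u)$ pointwise for a.e.\ $u$, where the right-hand side is the explicit function from Lemma~\ref{ftlem}. But Lemma~\ref{ftlem} was proved for $h\in(1,2)$, whereas here $h$ ranges over $(1/\alpha,1+1/\alpha)$, which for $\alpha\in(1,2)$ is not contained in $(1,2)$; so I cannot simply quote it.

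This is the step I expect to be the main obstacle, and the way around it is to re-run the argument of Lemma~\ref{ftlem} directly: formula \eqref{ft}, namely
\begin{equation*}
\frac{1}{\Gamma(h)}\big((t-u)_+^{h-1}-(-u)_+^{h-1}\big) = \frac{1}{2\pi}\int_{-\infty-ai}^{\infty-ai}e^{iuy}(1-e^{-ity})(-iy)^{-h}\,dy,
\end{equation*}
is valid for \emph{every} $h>0$ since it comes only from the operational-calculus identity for $z^{-h}=\frac{1}{\Gamma(h)}\int_0^\infty e^{-vz}v^{h-1}\,dv$, $\re z>0$, which needs nothing beyond $h>0$. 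The only place $h<2$ was used is the estimate \eqref{ftineq} ensuring integrability of the contour integrand near $\infty$ (where it behaves like $\abs{x}^{-h}$, needing $h>1$) and near $0$ (where it behaves like $t\abs{x}^{1-h}$, needing $h<2$). For our range $h\in(1/\alpha,1+1/\alpha)\subset(1/2,3/2)$ the large-$\abs{x}$ bound $\abs{x}^{-h}$ may fail to be integrable (when $h\le 1$), so I would instead justify the $a\to0+$ passage on each truncation $f_n$ separately: for fixed $n$ the integrand is supported on a fixed annulus, the contour shift is harmless, and one obtains $\widehat{f_n}(u)=\frac{1}{2\pi i}\oint\cdots$ converging as $n\to\infty$ to the right-hand side of \eqref{ft} by dominated convergence on the line $\{x-ai\}$ using the domination $t\abs{x}^{1-h}\wedge\abs{x}^{-h}$ — which, crucially, need only be \emph{locally} integrable plus integrable at infinity after the $L^\alpha$-approximation has already been carried out. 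Combining the a.e.\ limit of $\widehat{f_n}$ with the $L^\beta$-limit $\F_\alpha f_n\to\F_\alpha f_{h,t}$ identifies $\F_\alpha f_{h,t}$ with $\frac{2\pi}{\Gamma(h)}\big((t-u)_+^{h-1}-(-u)_+^{h-1}\big)$ a.e., which is the assertion. Finally I would note that applying this with $h=H(t)+1/\alpha$ — which lies in $(1/\alpha,1+1/\alpha)$ since $H(t)\in(0,1)$ — and rescaling the constant gives exactly the formula for $\widehat{f_Y}$ used in the proof of Theorem~\ref{mainthm}.
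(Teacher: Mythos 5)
Your overall frame -- approximate $f_{h,t}$ by functions in $L^1(\R)\cap L^\alpha(\R)$, pass to the limit with the Hausdorff--Young bound \eqref{hyi}, and identify the limit through the contour formula \eqref{ft} -- is the same as the paper's, and your verification that $f_{h,t}\in L^\alpha(\R)$ precisely for $h\in(1/\alpha,1+1/\alpha)$ is correct. The genuine gap is in the step you yourself flag as the obstacle: proving $\widehat{f_n}(u)\to\frac{2\pi}{\Gamma(h)}\big((t-u)_+^{h-1}-(-u)_+^{h-1}\big)$ for the hard truncations $f_n=f_{h,t}\ind{1/n\le\abs{x}\le n}$ when $h\le 1$. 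Your justification appeals to dominated convergence on the line $\{x-ai\}$ with majorant $t\abs{x}^{1-h}\wedge\abs{x}^{-h}$, but this majorant is not integrable at infinity exactly when $h\le 1$ (which is the only reason Lemma~\ref{ftlem} assumes $h\in(1,2)$), and the clause ``need only be locally integrable plus integrable at infinity after the $L^\alpha$-approximation has already been carried out'' is not an argument: once you demand a pointwise limit of the non-absolutely-convergent integrals $\widehat{f_n}(u)$, the earlier $L^\alpha$ estimates give you nothing. Likewise ``for fixed $n$ the contour shift is harmless'' is unjustified: $f_n$ is a truncation, not a boundary value of an analytic function, so moving $\int_{1/n\le\abs{x}\le n}e^{iuy}(1-e^{-ity})(-iy)^{-h}\,dy$ to the line $\Im y=-a$ produces four vertical boundary segments; the outer ones (at $\re y=\pm n$) do vanish as $n\to\infty$, but the inner ones (at $\re y=\pm 1/n$) converge to integrals along the two sides of the segment from $0$ to $-ia$, which is precisely the branch cut of $(-iy)^{-h}$, and your sketch neither shows these contributions vanish nor accounts for them. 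Finally, the blanket claim that \eqref{ft} ``is valid for every $h>0$'' also needs care: for $h\le 1$ both the Bromwich integral and the contour integral in \eqref{ft} converge only conditionally, so even the right-hand side you are aiming at is not yet a well-defined Lebesgue integral in your setting.

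The paper's proof avoids all of this by choosing a different approximating family: it never truncates in $x$ and never takes pointwise limits of conditionally convergent integrals. The regularization parameter is the contour shift $a$ itself. The integrand of \eqref{ft} on the line $\Im y=-a$, viewed as a function of $x=\re y$, is the approximant; the estimate \eqref{ftineq} raised to the power $\alpha$, namely $C(u)^\alpha\big(t\abs{x}^{\alpha(1-h)}\wedge\abs{x}^{-\alpha h}\big)$, is integrable exactly for $h\in(1/\alpha,1+1/\alpha)$, so these functions converge to $f_{h,t}$ in $L^\alpha(\R)$ as $a\to0+$, and the continuity \eqref{hyi} of $\F_\alpha$ transfers the identity \eqref{ft} to the limit. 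In other words, the role your $n\to\infty$ limit was supposed to play is played in the paper by $a\to0+$, where the required domination does hold at the level of $L^\alpha$ rather than $L^1$. If you wish to keep the truncation route, you must supply an honest proof of the conditional convergence and identification of $\lim_n\widehat{f_n}(u)$ for $h\in(1/\alpha,1]$ (for instance an integration-by-parts/Dirichlet-test argument together with a careful treatment of the branch cut and boundary segments); as written, that step fails.
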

\begin{proof}
Repeat the proof of the previous lemma to inequality \eqref{ftineq} and raise it to the power $\alpha$:
$$
\abs{e^{iuy}(1-e^{-ity})(-iy)^{-h}}^\alpha\le C(u)^\alpha (t\abs{x}^{\alpha(1-h)}\wedge |x|^{-\alpha h}),
$$
which is integrable for $h\in (1/\alpha,1+1/\alpha)$. So the integrands in \eqref{ft} converge as $a\to 0+$
in $L^\alpha(\R)$ to $f_{h,t}(x)$ by the dominated convergence theorem, hence by continuity of $\F_\alpha$ on $L^\alpha(\R)$ we get the
statement of the lemma.
\end{proof}

\bibliographystyle{elsarticle-harv}
\bibliography{dozzi-shev.version02.12.10}

\begin{thebibliography}{17}
\expandafter\ifx\csname natexlab\endcsname\relax\def\natexlab#1{#1}\fi
\expandafter\ifx\csname url\endcsname\relax
  \def\url#1{\texttt{#1}}\fi
\expandafter\ifx\csname urlprefix\endcsname\relax\def\urlprefix{URL }\fi

\bibitem[{Benassi et~al.(1997)Benassi, Jaffard, and
  Roux}]{BenassiJaffardRoux97}
Benassi, A., Jaffard, S., Roux, D., 1997. {Gaussian processes and
  pseudodifferential elliptic operators}. Revista Mathematica Iberoamericana
  13~(1), 19--89.

\bibitem[{Berman(1970)}]{Ber70}
Berman, S., 1970. {Gaussian processes with stationary increments : local times
  and sample function properties.} Ann. Math. Statist. 41, 1260--1272.

\bibitem[{Berman(1973)}]{Ber73}
Berman, S., 1973. {Local nondeterminism and local times of Gaussian processes.}
  Indiana Univ. Math. J. 23, 69--94.

\bibitem[{Bierm\'e and Lacaux(2009)}]{lacaux09}
Bierm\'e, H., Lacaux, C., 2009. {H\"older regularity for operator scaling
  stable random fields.} Stochastic Processes Appl. 119~(7), 2222--2248.

\bibitem[{Boufoussi et~al.(2008)Boufoussi, Dozzi, and Guerbaz}]{dozzietal}
Boufoussi, B., Dozzi, M.~E., Guerbaz, R., 2008. {Path properties of a class of
  locally asymptotically self similar processes.} Electron. J. Probab. 13,
  898--921.

\bibitem[{Dobri{\'c} and Ojeda(2006)}]{dobricojeda}
Dobri{\'c}, V., Ojeda, F.~M., 2006. Fractional {B}rownian fields, duality, and
  martingales. In: High dimensional probability. Vol.~51 of IMS Lecture Notes
  Monogr. Ser. Inst. Math. Statist., pp. 77--95.

\bibitem[{K\^ono and Maejima(1991{\natexlab{a}})}]{kono-maejima}
K\^ono, N., Maejima, M., 1991{\natexlab{a}}. {H\"older continuity of sample
  paths of some self-similar stable processes.} Tokyo J. Math. 14~(1), 93--100.

\bibitem[{K\^ono and Maejima(1991{\natexlab{b}})}]{KM}
K\^ono, N., Maejima, M., 1991{\natexlab{b}}. {Self-similar stable processes
  with stationary increments.} In: {Stable processes and related topics, Sel.
  Pap. Workshop, Ithaca/NY (USA) 1990}. Vol.~25 of Prog. Probab. pp. 275--295.

\bibitem[{Lacaux(2004)}]{lacaux04}
Lacaux, C., 2004. Real harmonizable multifractional {L}\'evy motions. Ann.
  Inst. H. Poincar\'e Probab. Statist. 40~(3), 259--277.

\bibitem[{Lieb and Loss(2001)}]{Lieb}
Lieb, E.~H., Loss, M., 2001. {Analysis. 2nd ed.} {Graduate Studies in
  Mathematics. 14. Providence, RI: American Mathematical Society (AMS). xxii,
  346 p.}

\bibitem[{Marcus and Pisier(1984)}]{marcus-pisier}
Marcus, M., Pisier, G., 1984. {Characterizations of almost surely continuous
  p-stable random Fourier series and strongly stationary processes.} Acta Math.
  152, 245--301.

\bibitem[{Nolan(1989)}]{nolan}
Nolan, J.~P., 1989. {Local nondeterminism and local times for stable
  processes.} Probab. Theory Relat. Fields 82~(3), 387--410.

\bibitem[{Peltier and {L\'evy V\'ehel}(1995)}]{PeltierLevyVehel}
Peltier, R.~F., {L\'evy V\'ehel}, J., 1995. {Multifractional Brownian motion:
  definition and preliminary results}. INRIA research report~(2645).

\bibitem[{Ralchenko and Shevchenko(2010)}]{ralchenko}
Ralchenko, K.~V., Shevchenko, G.~M., 2010. {Path properties of multifractal
  Brownian motion}. Theor. Probability and Math. Statist. 80, 119--130.

\bibitem[{Samorodnitsky and Taqqu(1994)}]{SamorTaqqu}
Samorodnitsky, G., Taqqu, M.~S., 1994. {Stable non-Gaussian random processes:
  stochastic models with infinite variance.} {Stochastic Modeling. New York,
  NY: Chapman \&\ Hall. xviii, 632 p.}

\bibitem[{Xiao(2006)}]{Xiao}
Xiao, Y., 2006. {Properties of local nondeterminism of Gaussian and stable
  random ¯elds and their applications}. Ann. Fac. Sci. Toulouse Math. XV,
  157--193.

\bibitem[{Xiao(2010)}]{Xiao2}
Xiao, Y., 2010. {Properties of strong local nondeterminism and local times of
  stable random fields.} Seminar on Stochastic Analysis, Random Fields and
  applications VI, Birkh{\"a}user Verlag Progress in Probability 63.

\end{thebibliography}
\end{document}